\newtheorem{theorem}{Theorem}[section]
\newtheorem{prop}[theorem]{Proposition}
\theoremstyle{definition}
\newtheorem{defn}[theorem]{Definition}
\newtheorem{lemma}[theorem]{Lemma}
\newtheorem{coro}[theorem]{Corollary}
\newtheorem{prop-def}{Proposition-Definition}[section]
\newtheorem{coro-def}{Corollary-Definition}[section]
\newtheorem{exam}{Example}[section]
\newcommand{\nc}{\newcommand}
\nc{\tred}[1]{\textcolor{red}{#1}}
\nc{\tblue}[1]{\textcolor{blue}{#1}}
\nc{\tgreen}[1]{\textcolor{green}{#1}}
\nc{\tpurple}[1]{\textcolor{purple}{#1}}
\nc{\btred}[1]{\textcolor{red}{\bf #1}}
\nc{\btblue}[1]{\textcolor{blue}{\bf #1}}
\nc{\btgreen}[1]{\textcolor{green}{\bf #1}}
\nc{\btpurple}[1]{\textcolor{purple}{\bf #1}}
\nc{\NN}{{\mathbb N}}
\nc{\ncsha}{{\mbox{\cyr X}^{\mathrm NC}}} \nc{\ncshao}{{\mbox{\cyr
X}^{\mathrm NC}_0}}
\newcommand{\efootnote}[1]{}
\renewcommand{\textbf}[1]{}
\newcommand{\delete}[1]{}
\nc{\mlabel}[1]{\label{#1}}  
\nc{\mcite}[1]{\cite{#1}}  
\nc{\mref}[1]{\ref{#1}}  
\nc{\mbibitem}[1]{\bibitem{#1}} 
\nc{\mlabel}[1]{\label{#1}  
{\hfill \hspace{1cm}{\small\tt{{\ }\hfill(#1)}}}}
\nc{\mcite}[1]{\cite{#1}{\small{\tt{{\ }(#1)}}}}  
\nc{\mref}[1]{\ref{#1}{{\tt{{\ }(#1)}}}}  
\nc{\mbibitem}[1]{\bibitem[\bf #1]{#1}} 
\nc{\opa}{\ast} \nc{\opb}{\odot} \nc{\op}{\bullet} \nc{\pa}{\frakL}
\nc{\arr}{\rightarrow} \nc{\lu}[1]{(#1)} \nc{\mult}{\mrm{mult}}
\nc{\diff}{\mathfrak{Diff}}
\nc{\opc}{\sharp}\nc{\opd}{\natural}
\nc{\ope}{\circ}
\nc{\dpt}{\mathrm{d}}
\nc{\tforall}{\text{ for all }}
\nc{\diam}{alternating\xspace}
\nc{\Diam}{Alternating\xspace}
\nc{\cdiam}{alternating\xspace}
\nc{\Cdiam}{Alternating\xspace}
\nc{\AW}{\mathcal{A}}
\nc{\rba}{Rota-Baxter algebra\xspace}
\nc{\ari}{\mathrm{ar}}
\nc{\lef}{\mathrm{lef}}
\nc{\Sh}{\mathrm{ST}}
\nc{\Cr}{\mathrm{Cr}}
\nc{\st}{{Schr\"oder tree}\xspace}
\nc{\sts}{{Schr\"oder trees}\xspace}
\nc{\vertset}{\Omega} 
\nc{\assop}{\quad \begin{picture}(5,5)(0,0)
\line(-1,1){10}
\put(-2.2,-2.2){$\bullet$}
\line(0,-1){10}\line(1,1){10}
\end{picture} \quad \smallskip}
\nc{\operator}{\begin{picture}(5,5)(0,0)
\line(0,-1){6}
\put(-2.6,-1.8){$\bullet$}
\line(0,1){9}
\end{picture}}
\nc{\idx}{\begin{picture}(6,6)(-3,-3)
\put(0,0){\line(0,1){6}}
\put(0,0){\line(0,-1){6}}
 \end{picture}}
\nc{\pb}{{\mathrm{pb}}}
\nc{\Lf}{{\mathrm{Lf}}}
\nc{\lft}{{left tree}\xspace}
\nc{\lfts}{{left trees}\xspace}
\nc{\fat}{{fundamental averaging tree}\xspace}
\nc{\fats}{{fundamental averaging trees}\xspace}
\nc{\avt}{\mathrm{Avt}}
\nc{\rass}{{\mathit{RAss}}}
\nc{\aass}{{\mathit{AAss}}}
\nc{\vin}{{\mathrm Vin}}    
\nc{\lin}{{\mathrm Lin}}    
\nc{\inv}{\mathrm{I}n}
\nc{\gensp}{V} 
\nc{\genbas}{\mathcal{V}} 
\nc{\bvp}{V_P}     
\nc{\gop}{{\,\omega\,}}     
\nc{\bin}[2]{ (_{\stackrel{\scs{#1}}{\scs{#2}}})}  
\nc{\binc}[2]{ \left (\!\! \begin{array}{c} \scs{#1}\\
    \scs{#2} \end{array}\!\! \right )}  
\nc{\bincc}[2]{  \left ( {\scs{#1} \atop
    \vspace{-1cm}\scs{#2}} \right )}  
\nc{\bs}{\bar{S}} \nc{\cosum}{\sqsubset} \nc{\la}{\longrightarrow}
\nc{\rar}{\rightarrow} \nc{\dar}{\downarrow} \nc{\dprod}{**}
\nc{\dap}[1]{\downarrow \rlap{$\scriptstyle{#1}$}}
\nc{\md}{\mathrm{dth}} \nc{\uap}[1]{\uparrow
\rlap{$\scriptstyle{#1}$}} \nc{\defeq}{\stackrel{\rm def}{=}}
\nc{\disp}[1]{\displaystyle{#1}} \nc{\dotcup}{\
\displaystyle{\bigcup^\bullet}\ } \nc{\gzeta}{\bar{\zeta}}
\nc{\hcm}{\ \hat{,}\ } \nc{\hts}{\hat{\otimes}}
\nc{\barot}{{\otimes}} \nc{\free}[1]{\bar{#1}}
\nc{\uni}[1]{\tilde{#1}} \nc{\hcirc}{\hat{\circ}} \nc{\lleft}{[}
\nc{\lright}{]} \nc{\lc}{\lfloor} \nc{\rc}{\rfloor}
\nc{\curlyl}{\left \{ \begin{array}{c} {} \\ {} \end{array}
    \right .  \!\!\!\!\!\!\!}
\nc{\curlyr}{ \!\!\!\!\!\!\!
    \left . \begin{array}{c} {} \\ {} \end{array}
    \right \} }
\nc{\longmid}{\left | \begin{array}{c} {} \\ {} \end{array}
    \right . \!\!\!\!\!\!\!}
\nc{\onetree}{\bullet} \nc{\ora}[1]{\stackrel{#1}{\rar}}
\nc{\ola}[1]{\stackrel{#1}{\la}}
\nc{\ot}{\otimes} \nc{\mot}{{{\boxtimes\,}}}
\nc{\otm}{\overline{\boxtimes}} \nc{\sprod}{\bullet}
\nc{\scs}[1]{\scriptstyle{#1}} \nc{\mrm}[1]{{\rm #1}}
\nc{\margin}[1]{\marginpar{\rm #1}}   
\nc{\dirlim}{\displaystyle{\lim_{\longrightarrow}}\,}
\nc{\invlim}{\displaystyle{\lim_{\longleftarrow}}\,}
\nc{\mvp}{\vspace{0.3cm}} \nc{\tk}{^{(k)}} \nc{\tp}{^\prime}
\nc{\ttp}{^{\prime\prime}} \nc{\svp}{\vspace{2cm}}
\nc{\vp}{\vspace{8cm}} \nc{\proofbegin}{\noindent{\bf Proof: }}
\nc{\proofend}{$\blacksquare$ \vspace{0.3cm}}
\nc{\modg}[1]{\!<\!\!{#1}\!\!>}
\nc{\intg}[1]{F_C(#1)} \nc{\lmodg}{\!
<\!\!} \nc{\rmodg}{\!\!>\!}
\nc{\cpi}{\widehat{\Pi}}
\nc{\sha}{{\mbox{\cyr X}}}  
\nc{\shap}{{\mbox{\cyrs X}}} 
\nc{\shan}{{\overrightarrow \sha}}
\nc{\shat}{{\sha}}
\nc{\shpr}{\diamond}    
\nc{\shp}{\ast}
\nc{\shplus}{\shpr^+}
\nc{\shprc}{\shpr_c}    
\nc{\msh}{\ast} \nc{\zprod}{m_0} \nc{\oprod}{m_1}
\nc{\vep}{\varepsilon} \nc{\labs}{\mid\!} \nc{\rabs}{\!\mid}
\nc{\sqmon}[1]{\langle #1\rangle}
\nc{\mmbox}[1]{\mbox{\ #1\ }} \nc{\dep}{\mrm{dep}} \nc{\fp}{\mrm{FP}}
\nc{\rchar}{\mrm{char}} \nc{\End}{\mrm{End}} \nc{\Fil}{\mrm{Fil}}
\nc{\Mor}{Mor\xspace} \nc{\gmzvs}{gMZV\xspace}
\nc{\gmzv}{gMZV\xspace} \nc{\mzv}{MZV\xspace}
\nc{\mzvs}{MZVs\xspace} \nc{\Hom}{\mrm{Hom}} \nc{\id}{\mrm{id}}
\nc{\im}{\mrm{im}} \nc{\incl}{\mrm{incl}} \nc{\map}{\mrm{Map}}
\nc{\mchar}{\rm char} \nc{\nz}{\rm NZ} \nc{\supp}{\mathrm Supp}
\nc{\Alg}{\mathbf{Alg}} \nc{\Bax}{\mathbf{Bax}} \nc{\bff}{\mathbf f}
\nc{\bfk}{{\bf k}} \nc{\bfone}{{\bf 1}} \nc{\bfx}{\mathbf x}
\nc{\bfy}{\mathbf y}
\nc{\base}[1]{\bfone^{\otimes ({#1}+1)}} 
\nc{\Cat}{\mathbf{Cat}}
\nc{\detail}{\marginpar{\bf More detail}
    \noindent{\bf Need more detail!}
    \svp}
\nc{\Int}{\mathbf{Int}} \nc{\Mon}{\mathbf{Mon}}
\nc{\rbtm}{{shuffle }} \nc{\rbto}{{Rota-Baxter }}
\nc{\remarks}{\noindent{\bf Remarks: }} \nc{\Rings}{\mathbf{Rings}}
\nc{\Sets}{\mathbf{Sets}} \nc{\wtot}{\widetilde{\odot}}
\nc{\wast}{\widetilde{\ast}} \nc{\bodot}{\bar{\odot}}
\nc{\bast}{\bar{\ast}} \nc{\hodot}[1]{\odot^{#1}}
\nc{\hast}[1]{\ast^{#1}} \nc{\mal}{\mathcal{O}}
\nc{\tet}{\tilde{\ast}} \nc{\teot}{\tilde{\odot}}
\nc{\oex}{\overline{x}} \nc{\oey}{\overline{y}}
\nc{\oez}{\overline{z}} \nc{\oef}{\overline{f}}
\nc{\oea}{\overline{a}} \nc{\oeb}{\overline{b}}
\nc{\weast}[1]{\widetilde{\ast}^{#1}}
\nc{\weodot}[1]{\widetilde{\odot}^{#1}} \nc{\hstar}[1]{\star^{#1}}
\nc{\lae}{\langle} \nc{\rae}{\rangle}
\nc{\lf}{\lfloor}
\nc{\rf}{\rfloor}
\nc{\QQ}{{\mathbb Q}}
\nc{\RR}{{\mathbb R}} \nc{\ZZ}{{\mathbb Z}}
\nc{\cala}{{\mathcal A}} \nc{\calb}{{\mathcal B}}
\nc{\calc}{{\mathcal C}}
\nc{\cald}{{\mathcal D}} \nc{\cale}{{\mathcal E}}
\nc{\calf}{{\mathcal F}} \nc{\calg}{{\mathcal G}}
\nc{\calh}{{\mathcal H}} \nc{\cali}{{\mathcal I}}
\nc{\call}{{\mathcal L}} \nc{\calm}{{\mathcal M}}
\nc{\caln}{{\mathcal N}}\nc{\calo}{{\mathcal O}}
\nc{\calp}{{\mathcal P}} \nc{\calr}{{\mathcal R}}
\nc{\cals}{{\mathcal S}} \nc{\calt}{{\mathcal T}}
\nc{\calu}{{\mathcal U}} \nc{\calw}{{\mathcal W}} \nc{\calk}{{\mathcal K}}
\nc{\calx}{{\mathcal X}} \nc{\CA}{\mathcal{A}}
\nc{\fraka}{{\mathfrak a}} \nc{\frakA}{{\mathfrak A}}
\nc{\frakb}{{\mathfrak b}}
\nc{\frakc}{{\mathfrak c}}
\nc{\frakd}{{\mathfrak d}}
\nc{\frakB}{{\mathfrak B}}
\nc{\frakD}{{\mathfrak D}} \nc{\frakF}{\mathfrak{F}}
\nc{\frakf}{{\mathfrak f}} \nc{\frakg}{{\mathfrak g}}
\nc{\frakH}{{\mathfrak H}} \nc{\frakL}{{\mathfrak L}}
\nc{\frakM}{{\mathfrak M}} \nc{\bfrakM}{\overline{\frakM}}
\nc{\frakm}{{\mathfrak m}} \nc{\frakP}{{\mathfrak P}}
\nc{\frakN}{{\mathfrak N}} \nc{\frakp}{{\mathfrak p}}
\nc{\frakS}{{\mathfrak S}} \nc{\frakT}{\mathfrak{T}}
\nc{\frakX}{{\mathfrak X}} \nc{\frakx}{\mathfrak{x}}
\nc{\BS}{\mathbb{S
}}
\font\cyr=wncyr10 \font\cyrs=wncyr7
\nc{\li}[1]{\textcolor{red}{#1}}
\nc{\lir}[1]{\textcolor{red}{Li:#1}}
\nc{\sz}[1]{\textcolor{blue}{SZ: #1}}
\nc{\ID}{\mathfrak{I}} \nc{\lbar}[1]{\overline{#1}}
\nc{\bre}{{\rm b}} \nc{\sd}{\cals} \nc{\rb}{\rm RB}
\nc{\A}{\rm angularly decorated\xspace} \nc{\LL}{\rm L}
\nc{\w}{\rm wid} \nc{\arro}[1]{#1}
\nc{\ver}{\rm ver}
\nc{\FN}{F_{\mathrm R}}
\nc{\FR}{F_{\mathrm L}}
\nc{\FNA}{\FN(A)} \nc{\NA}{N_{A}}
\nc{\dr}{\diamond_\lambda}
\nc{\shar}{{\mbox{\cyrs X}}_r} 
\nc{\shrp}{\sha^+} 
\nc{\dt}{\Delta_\lambda}
\nc{\da}{\Delta_A}
\nc{\vt}{\vep_\lambda }
\nc{\ml}{\mu_\lambda}
\nc{\pl}{P_\lambda}
\nc{\bul}{\bullet}
\nc{\fraku}{{\mathfrak U}}
\nc{\rt}{\rm $\lambda$-TD }
\nc{\rto}{\rm P}
\nc{\lam}{\Lambda}
\begin{document}

\title[$\lambda$-TD algebras, generalized shuffle products and left counital Hopf algebras]{$\lambda$-TD algebras, generalized shuffle products and left counital Hopf algebras}
%
\author{Hengyi Luo}
\address{Department of Mathematics, Jiangxi Normal University, Nanchang, Jiangxi 330022, China}
\email{2011713958@qq.com}

\author{Shanghua Zheng}
\address{Department of Mathematics, Jiangxi Normal University, Nanchang, Jiangxi 330022, China}
\email{zhengsh@jxnu.edu.cn}

\date{\today}
\begin{abstract}
The theory of operated algebras has played a pivotal role in mathematics and physics. In this paper, we introduce a $\lambda$-TD algebra that appropriately includes both the Rota-Baxter algebra and the TD-algebra. The explicit construction of free commutative $\lambda$-TD algebra on a commutative algebra is obtained by generalized shuffle products, called  $\lambda$-TD shuffle products. We then show that the free commutative $\lambda$-TD algebra possesses a left counital bialgera structure by means of a suitable 1-cocycle condition. Furthermore, the classical result that every connected filtered bialgebra is a Hopf algebra, is extended to the context of left counital  bialgebras. Given this result, we finally prove that the left counital bialgebra on the free commutative $\lambda$-TD algebra is connected and filtered, and thus is a left counital Hopf algebra.
\end{abstract}

\keywords{Rota-Baxter  algebra, TD algebra, filtered bialgebra,  generalized shuffle product,   left counital Hopf algebra\\
\qquad 2020 Mathematics Subject Classification. 17B38, 16W70, 16S10, 16T05}

\maketitle

\tableofcontents

\setcounter{section}{0}

\allowdisplaybreaks

\section{Introduction}
An algebra with (one or more) linear operators, first appeared in ~\cite{Ku} in the 1960s, is vital in the recent developments in a wide range of areas. The notion of an operated algebra (that is, an associative algebra with only one linear operator) was proposed by Guo for constructing the free Rota-Baxter algebra~\cite{Gop}. A {\bf Rota-Baxter algebra of weight $\lambda$} (also called a {\bf $\lambda$-Rota-Baxter algebra}) is an associative algebra $R$ equipped with a linear operator $P:R\to R$ satisfying the {\bf Rota-Baxter equation}
\begin{equation}
P(x)P(y)=P(xP(y)+P(x)y+\lambda xy),\,\tforall\, x,y\in R.
\mlabel{eq:RB}
\end{equation} 
Then $P$ is called a {\bf Rota-Baxter operator of weight $\lambda$} or a {\bf $\lambda$-Rota-Baxter operator}, where $\lambda$ is a constant. Derived from the work of Baxter on probability study~\cite{Ba}, the Rota-Baxter operator is intimately connected with the classical Yang-Baxter equation~\cite{Bai}, number theory~\cite{G-Z}, combinatorics~\cite{Gop,Ro,Ro2, YGT}, and most conspicuously,  renormalization in quantum field theory based on the Hopf algebra framework of Connes and Kreimer~\cite{EGK,EGM}.

On the other hand, the direct connection between Rota-Baxter algebras and dendriform algebras was first provided by Aguiar~\cite{Ag}, who proved that every Rota-Baxter algebra of weight $0$ naturally gives a  dendriform algebra. Likewise,  Ebrahimi-Fard and Guo showed that every Rota-Baxter algebra of non-zero weight $\lambda$ carries a tridendriform algebra structure~\cite{EG}. In order to offer another way to produce the tridendriform algebra, the TD operator, which can be formally viewed as an  analog of the Rota-Baxter operator, was invented by Leroux~\cite{Le}. A {\bf TD operator} $P: R\to R$ is a linear operator satisfying the {\bf TD equation}
\begin{equation}
P(x)P(y)=P(xP(y)+P(x)y-xP(1)y),\,\tforall\, x,y\in R.
\mlabel{eq:TD}
\end{equation}
Note that when $\lambda$ takes $-P(1)$ in Eq.~(\mref{eq:RB}), the Rota-Baxter opeator becomes a TD operator. From this viewpoint, the TD operator can be considered as a special  class of Rota-Baxter operators. Indeed, the TD operator  belongs to the category of Rota-Baxter type operator~\cite{ZGGS}, which was proposed for solving the Rota's problem of classifying all linear operators on an associative algebra.

In recent years, classical operators, such as differential operators,  Rota-Baxter operators and TD operators, are generalized in diverse ways for developing the various  algebraic structures and phenomena~\cite{G-K3,ZGGS}.  For instance,  The $\lambda$-different operator was introduced by Guo and Keigher in ~\cite{G-K3} for uniformly studying the algebraic structure with both a differential operator and a difference operator, and for the same reason the $\lambda$-differential Rota-Baxter operator was discovered.  Subsequently, in ~\cite{Agg}, the concept of {\bf Rota-Baxter Nijenhuis TD operaotrs} or {\bf RBNTD operators} was represented as a combination of Rota-Baxter operator, Nijenhuis operator and TD operator, giving rise to a RBNTD-dendriform algebra and a five-part splitting of associativity. The Rota-Baxter Nijenhuis TD operator is  defined by the {\bf Rota-Baxter Nijenhuis TD equation}
\begin{equation}
P(x)P(y)=P(xP(y)+P(x)y+\lambda xy-P(xy)-xP(1)y),\quad\tforall x,y\in R.
\mlabel{eq:rbntd}
\end{equation}
Lately, Zhou and Guo~\cite{ZhG17} introduced the concept of a {\bf Rota-Baxter TD operator}, given by the {\bf Rota-Baxter TD equation}
\begin{equation}
P(x)P(y)=P(xP(y)+P(x)y+\lambda xy-xP(1)y-xyP(1)),\quad\tforall x,y\in R.
\mlabel{eq:rbtd}
\end{equation}
As a consequence, a Rota-Baxter TD operator also gives  a five-part splitting of the associativity and induces a quinquedendriform algebra structure.
One can see at once that every TD operator contains the Rota-Baxter operator by taking $P(1)=0$. But if we require $\lambda=0$, the Rota-Baxter TD operator is not a TD operator in general. In this work, we will mainly develop a more appropriate fusion of a Rota-Baxter operator and a TD operator, called a {\bf $\lambda$-TD operator} or a {\bf TD operator of weight $\lambda$} (See Definition~\mref{defn:LTD}).

It is well-known that one of the most meaningful examples of Hopf algebras for applications in mathematical physics is  the Connes-Kreimer Hopf algebra of rooted forests~\cite{CK2,CK}, whose coproduct satisfies the 1-cocycle property. Lately, Hopf algebraic structures on the free non-commutative  Rota-Baxter algebra of decorated rooted forests has been achieved by the same way~\cite{ZGG16}. Furthermore, it is worth mentioning that the explicit constructions of free non-commutative TD algebras and free non-commutative Rota-Baxter TD algebras were also accomplished  by using the rooted trees~\cite{ZhG17,Zhou12}. Based on the construction of  shuffle product Hopf algebras, a Hopf algebra structure was established on the free  commutative (modified) Rota-Baxter algebra by means of  various generalized shuffle products~\cite{AGKO,EG1,GK1,ZGG19}. Motivated by this, in~\cite{GLZ,ZG}, the Hopf algebraic structure on free commutative  and non-commutative Nijenhuis algebras was considered spontaneously. However, it turns out that this method can not produce a genuine Hopf algebra again, only one with a left-sided counit and right-sided antipode. Such Hopf-type algebra is called a {\bf left counital Hopf algebra} (See Definition~\mref{defn:lcHopf}) to distinguish between this Hopf algebra and the usual Hopf algebra. Intriguingly, algebra structures associated with it have already occurred in the study of quantum group~\cite{GNT,RT} and combinatorics~\cite{FLS,FS}. See~\cite{Li,Wa} for other variants of Hopf algebra under more weaker conditions.

Thanks to Rota-Baxter operators, TD operators and Nijenhuis operators~\cite{LG,LSZB} sharing analogous properties and similar applications, it  is reasonable to speculate that the free $\lambda$-TD algebras should possess a weakened form of Hopf algebra structure. In this paper, we primarily aim to  equip the  free commutative $\lambda$-TD algebra with a left counital Hopf algebra structure.

The paper is organized as follows. First of all, in Section~2, we give the concept of $\lambda$-TD algebras and then provide some general properties of $\lambda$-TD algebras in parallel to that of $\lambda$-Rota-Baxter algebras~\cite{Gub}. Then we combine the quasi-shuffle product and the left-shift shuffle product~\cite{EL} together, thus yielding a $\lambda$-TD shuffle product. This allows us to construct the free commutative $\lambda$-TD algebra on a commutative algebra. In Section~3,  we recall the concepts of left counital coalgebra and  left counital bialgebra. Then  applying a proper 1-cocycle property gives a coproduct on the free commutative $\lambda$-TD algebra. Afterwards, a left counit is also defined on it. Thus the free commutative $\lambda$-TD algebra possesses a left counital bialgebra. Finally in Section~4,  we first prove that every connected filtered left counital operated bialgebra is a Hopf algebra. We then show that the aforementioned left counital bialgebra on the free commutative $\lambda$-TD algebra satisfies the connectedness and has an increasing filtration, and thus leads to a left counital Hopf algebra.

\noindent
{\bf Convention. } In this paper, all algebras are taken to be unitary commutative over a unitary commutative ring $\bfk$ unless otherwise specified. Also linear maps and tensor products are taken over $\bfk$.

\section{Free commutative $\lambda$-TD algebras on a commutative algebra}
\label{sec:FreeCNij}

In this section, we  first present a more opportune combination of Rota-Baxter algebras and TD algebras, which can be regarded as one of Rota-Baxter type algebras.  Then  the general properties of $\lambda$-TD lagebas are developed.  The construction of free commutative $\lambda$-TD algebras will be given by the $\lambda$-TD shuffle product, as a generalization of shuffle product~\mcite{EL}.
\subsection{General properties of $\lambda$-TD algebras}

\begin{defn}Let $\lambda\in \bfk$.
A {\bf $\lambda$-TD algebra} is an  algebra $R$ equipped with a linear operator $P$, called a {\bf
$\lambda$-TD operator}, satisfying the {\bf $\lambda$-TD equation}:
\begin{equation}
P(x)P(y)=P(xP(y)+P(x)y+\lambda xy-xP(1)y),\quad\tforall\, x,y\in R.
\mlabel{eq:LTD}
\end{equation}
\mlabel{defn:LTD}
\end{defn}
Formally, every $\lambda$-TD-operator can be obtained from a Rota-Baxter operator by adding  the last term $-P(xP(1)y)$ on the right hand side of Eq.~(\mref{eq:TD})  to the right hand side of Eq.~(\mref{eq:RB}). Note that every TD operator is a $0$-TD operator. From this viewpoint, a $\lambda$-TD operator can be viewed as a natural generalization of TD-operator. Furthermore, we have
\begin{prop} Let $P$ be a linear operator on an algebra $R$. Let $\lambda\in \bfk$ be given.
\begin{enumerate}
\item
If $P(1)=0$, then $P$ is  a $\lambda$-TD operator if and only if $P$ is a Rota-Baxter operator of weight $\lambda$.
\mlabel{it:ltd1}
\item
If  $P(1)=\lambda$, then $P$ is  a $\lambda$-TD operator if and only if $P$ is a Rota-Baxter operator of weight $0$.
\mlabel{it:ltd3}
\item
If  $P(1)=2\lambda$, then $P$ is  a $\lambda$-TD operator if and only if $P$ is a Rota-Baxter operator of weight $-\lambda$.
\mlabel{it:ltd2}
\end{enumerate}
\end{prop}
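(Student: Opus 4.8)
The plan is to use the commutativity of $R$ to fold the correction term $-xP(1)y$ of the $\lambda$-TD equation (\mref{eq:LTD}) into the weight coefficient, thereby turning it into an honest Rota-Baxter equation (\mref{eq:RB}). Since $R$ is commutative, for all $x,y\in R$ we have $xP(1)y=P(1)xy$. Thus whenever $P(1)=c\bfone$ is a scalar multiple of the unit (which holds in each of the three cases), the bracketed argument of $P$ in (\mref{eq:LTD}) becomes $xP(y)+P(x)y+(\lambda-c)xy$, so (\mref{eq:LTD}) reads
\[
P(x)P(y)=P\bigl(xP(y)+P(x)y+(\lambda-c)xy\bigr),
\]
which is exactly the Rota-Baxter equation of weight $\lambda-c$. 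Because the value of $P(1)$ has been pinned down, these two equations are literally the same relation on $P$, so each implies the other; this delivers the biconditional in one stroke.

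It then remains only to specialize the constant $c$. First I would take $c=0$, i.e. $P(1)=0$, giving weight $\lambda-0=\lambda$ and hence part~(a). Next $c=\lambda$, i.e. $P(1)=\lambda\bfone$, giving weight $\lambda-\lambda=0$ and part~(b). Finally $c=2\lambda$, i.e. $P(1)=2\lambda\bfone$, giving weight $\lambda-2\lambda=-\lambda$ and part~(c). In each case one should also read the equivalence in reverse: a Rota-Baxter operator of the stated weight that happens to satisfy the prescribed normalization of $P(1)$ satisfies (\mref{eq:LTD}), and this is automatic because the substitution above is an identity rather than an implication.

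I expect no genuine obstacle here: the entire content is the single commutativity identity $xP(1)y=P(1)xy$ together with the bookkeeping of the scalar coefficient of $xy$. The only points deserving care are the convention that a hypothesis of the form $P(1)=\lambda$ is to be read as $P(1)=\lambda\bfone$, and the observation that commutativity is genuinely used---without it the term $-xP(1)y$ could not be merged with $\lambda xy$, and the reduction to a constant-weight Rota-Baxter operator would break down.
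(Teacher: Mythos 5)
Your proof is correct and is exactly the argument the paper intends: its own proof is the one-line remark that the items ``follow from Eqs.~(\mref{eq:RB}) and~(\mref{eq:LTD})'', i.e.\ precisely the substitution of $P(1)=c$ and absorption of $-xP(1)y$ into the weight term that you carry out. (The only cosmetic remark: since $P(1)$ is assumed to be a scalar multiple of the unit, the identity $xP(1)y=cxy$ holds because scalars are central, so commutativity of $R$ is not strictly what is being used, though the paper's standing convention makes $R$ commutative anyway.)
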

\begin{proof}
Items~(\mref{it:ltd1}),~(\mref{it:ltd3}) and ~(\mref{it:ltd2}) follow from Eqs.~(\mref{eq:RB}) and~(\mref{eq:LTD}).
\end{proof}
By Item~(\mref{it:ltd2}) and ~\cite[Proposition 2.2]{ZhG17}, if a $\lambda$-TD operator  $P$  satisfies  $P(1)=2\lambda$, then $P$ is a $\lambda$-RBTD operator.
By \cite[Proposition~1.1.12]{Gub}, a $\lambda$-Rota-Baxter operator $P$ leads to another $\lambda$-Rota-Baxter operator $-\lambda\id-P$. However, it is not necessarily true that if $P$ is a $\lambda$-TD operator, so is $-\lambda\id-P$.
From the $\lambda$-TD equation, we obtain
\begin{prop}
Let $P$ is a linear operator on a $\bfk$-algebra $R$. Then $P$ is a $\lambda$-TD operator if and only if $-P$ is a $-\lambda$-TD operator.
\end{prop}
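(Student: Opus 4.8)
The plan is to verify the claim by a single direct substitution, exploiting only the linearity of $P$ and the elementary observation that negating the operator trades the weight $\lambda$ for the weight $-\lambda$. Since the assertion is an ``if and only if'' and the assignment $P \mapsto -P$ is an involution (because $-(-P)=P$ and $-(-\lambda)=\lambda$), it suffices to establish one implication: assuming $P$ is a $\lambda$-TD operator, I would show that $Q := -P$ is a $-\lambda$-TD operator; the converse then follows by applying this to $Q$ in place of $P$.

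First I would record that $Q=-P$ is again a linear operator on $R$ and that $Q(1)=-P(1)$. Next I would write down the identity that $Q$ must satisfy in order to be a $-\lambda$-TD operator, namely
\begin{equation*}
Q(x)Q(y) = Q\bigl(xQ(y) + Q(x)y + (-\lambda)xy - xQ(1)y\bigr), \quad \tforall\, x,y \in R,
\end{equation*}
and substitute $Q=-P$ together with $Q(1)=-P(1)$ throughout. The left-hand side becomes $(-P(x))(-P(y)) = P(x)P(y)$. Inside the argument of $Q$ on the right-hand side every summand flips sign, so that applying the outer operator $Q=-P$ (which is linear) restores the original signs; the right-hand side therefore collapses to $P\bigl(xP(y) + P(x)y + \lambda xy - xP(1)y\bigr)$. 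Hence the $-\lambda$-TD identity for $Q$ is literally the $\lambda$-TD identity~(\mref{eq:LTD}) for $P$, which holds by hypothesis.

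I do not anticipate any serious obstacle; the only point demanding care is the sign bookkeeping. Specifically, one must check that the term $-xQ(1)y$ yields $+xP(1)y$ inside the argument \emph{before} the outer negation from $Q=-P$ is applied, and that the weight contribution $-\lambda\,xy$ for $Q$ matches $\lambda\,xy$ for $P$ only after that outer negation. Because each of these substitutions is reversible, reading the same computation in the opposite direction furnishes the converse implication, which completes the equivalence.
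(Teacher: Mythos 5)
Your proposal is correct: the direct substitution $Q=-P$, $Q(1)=-P(1)$ into the $-\lambda$-TD equation, with the sign bookkeeping you describe, reduces it exactly to the $\lambda$-TD equation for $P$, and the involution remark handles the converse. The paper offers no written proof beyond saying the claim follows from the $\lambda$-TD equation, so your computation is precisely the intended verification.
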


\begin{defn}
Let $P$ be a linear operator on $R$.  Then $P$ is called a {\bf $\lambda$-modified TD operator} if $P$ satisfies the {\bf $\lambda$-modified TD equation}
\begin{equation}
P(x)P(y)=P(xP(y)+P(x)y+\lambda xy)-xP(1)y.
\mlabel{eq:mtd}
\end{equation}
In this case, we call $(R,P)$  a {\bf $\lambda$-modified TD algebra}.
\end{defn}
Every Rota-Baxter algebra contains naturally a double structure, which is intimately related  to the splitting of associativity in algebras such as Loday type algebras, including dendriform algebras and tridendriform algebras~\mcite{Gub}. To explore this structure on  a $\lambda$-TD algebra $(R,P)$, we define another operation $\ast_\lambda$ on $R$, given by
\begin{equation}
x\ast_\lambda y =xP(y)+P(x)y+\lambda xy-xP(1)y.
\mlabel{eq:ast}
\end{equation}
We can prove that $(R,\ast_\lambda,P)$ is not a  $\lambda$-TD algebra by a direct calculation, that is, a $\lambda$-TD algebra does not have the double structure in general. But if the $\lambda$-TD operator $P$ satisfies $P^2=P$, then $(R,\ast_\lambda,P)$ is a $\lambda$-TD algebra. Furthermore, we obtain the following observation.

\begin{prop}
Let $(R,P)$ be a $\lambda$-TD algebra. Then
\begin{enumerate}
\item
The pair $(R,\ast_\lambda)$ is a nonunitary associative algebra;
\mlabel{it:nonass}
\item
The triple $(R,\ast_\lambda,P)$ is a $\lambda$-modified TD algebra.
\mlabel{it:mltd}
\end{enumerate}
\end{prop}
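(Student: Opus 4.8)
The plan is to reduce everything to one observation: by Eq.~(\mref{eq:ast}), the argument of $P$ on the right-hand side of the $\lambda$-TD equation~(\mref{eq:LTD}) is exactly $x\ast_\lambda y$, so Eq.~(\mref{eq:LTD}) is nothing but the identity
\begin{equation}
P(x\ast_\lambda y)=P(x)P(y),\quad\tforall\,x,y\in R.
\mlabel{eq:key}
\end{equation}
This converts any application of $P$ to a $\ast_\lambda$-product into an ordinary product of two $P$-values, and it is the single fact that drives both items.

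For item~(\mref{it:nonass}) I would verify associativity of $\ast_\lambda$ by direct expansion. Setting $u:=x\ast_\lambda y$ and unfolding $u\ast_\lambda z$, the only piece that is not immediately a monomial in $x,y,z$ and their images under $P$ and $P(1)$ is $P(u)=P(x\ast_\lambda y)$, which Eq.~(\mref{eq:key}) replaces by $P(x)P(y)$; symmetrically, $x\ast_\lambda(y\ast_\lambda z)$ produces $P(y\ast_\lambda z)=P(y)P(z)$. After these substitutions both $(x\ast_\lambda y)\ast_\lambda z$ and $x\ast_\lambda(y\ast_\lambda z)$ are $\bfk$-linear combinations of the same monomials, and using commutativity of $R$ one checks that the two lists coincide term by term. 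Since $\ast_\lambda$ visibly carries no unit, this yields the nonunitary associative algebra $(R,\ast_\lambda)$.

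For item~(\mref{it:mltd}) I must check that $P$ obeys the $\lambda$-modified TD equation~(\mref{eq:mtd}) with the product taken to be $\ast_\lambda$, that is,
\begin{equation}
P(x)\ast_\lambda P(y)=P\bigl(x\ast_\lambda P(y)+P(x)\ast_\lambda y+\lambda(x\ast_\lambda y)\bigr)-x\ast_\lambda P(1)\ast_\lambda y,
\mlabel{eq:mtdgoal}
\end{equation}
where $1$ denotes the unit of the original algebra $R$. Applying $P$ termwise to the argument of the first summand on the right and invoking Eq.~(\mref{eq:key}) three times collapses it to $P(x)P^2(y)+P^2(x)P(y)+\lambda P(x)P(y)$, while a direct expansion of the left-hand side, using commutativity for the last term, gives these same three terms minus $P(1)P(x)P(y)$. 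Hence~(\mref{eq:mtdgoal}) reduces to the single claim $x\ast_\lambda P(1)\ast_\lambda y=P(1)P(x)P(y)$. To establish it I would first record the auxiliary identity obtained by putting $x=1$ in Eq.~(\mref{eq:LTD}), namely $P^2(z)=P(1)P(z)-\lambda P(z)$; feeding this into the expansion of $P(1)\ast_\lambda z$ makes the remaining terms cancel in pairs, yielding $P(1)\ast_\lambda z=P(1)P(z)$ for all $z\in R$. As $R$ is commutative, $\ast_\lambda$ is commutative and, by item~(\mref{it:nonass}), associative, so
\begin{equation}
x\ast_\lambda P(1)\ast_\lambda y=P(1)\ast_\lambda(x\ast_\lambda y)=P(1)\,P(x\ast_\lambda y)=P(1)P(x)P(y),
\end{equation}
the last equality being Eq.~(\mref{eq:key}) once more. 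This settles item~(\mref{it:mltd}).

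The expansions behind item~(\mref{it:nonass}) are long but mechanical, so the real subtlety is conceptual rather than computational: one must resist reading $\ast_\lambda$ as unital, since the symbol $P(1)$ entering~(\mref{eq:mtdgoal}) refers to the unit of $R$ and not to a (nonexistent) $\ast_\lambda$-unit, and the auxiliary identity $P^2(z)=P(1)P(z)-\lambda P(z)$ is precisely what forces the cancellation behind $P(1)\ast_\lambda z=P(1)P(z)$. I expect that identity, rather than the bookkeeping of monomials, to be the crux of the argument.
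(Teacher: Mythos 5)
Your proposal is correct and follows essentially the same route as the paper: both arguments turn on reading the $\lambda$-TD equation as $P(x\ast_\lambda y)=P(x)P(y)$ together with the specialization $P(1)P(z)=P^2(z)+\lambda P(z)$, and your reduction of item (b) to $x\ast_\lambda P(1)\ast_\lambda y=P(1)P(x)P(y)$ is the same cancellation the paper performs, merely packaged through the auxiliary identity $P(1)\ast_\lambda z=P(1)P(z)$. The only divergence is item (a), where the paper simply cites a general result on Rota--Baxter type operators \cite{ZGGS} while you carry out the (correct) term-by-term expansion by hand.
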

\begin{proof}
(\mref{it:nonass}) follows from ~\cite[Proposition 2.37]{ZGGS}.
\smallskip 

\noindent
(\mref{it:mltd}) By Eqs.~(\mref{eq:LTD}) and~(\mref{eq:ast}), we obtain
\begin{eqnarray*}
P(x)\ast_\lambda P(y)&=&P(x)P^2(y)+P^2(x)P(y)+\lambda P(x)P(y)-P(x)P(1)P(y)\\
&=&P(x)P^2(y)+P^2(x)P(y)+\lambda P(x)P(y)-(P^2(x)+\lambda P(x))P(y)\\
&=&P(x)P^2(y)\\
&=&P(P(x)P(y)).
\end{eqnarray*}
On the other hand, by Item~(\mref{it:nonass}) and Eqs.~(\mref{eq:LTD}) and~(\mref{eq:ast}), we get
\begin{eqnarray*}
&&P\Big(x\ast_\lambda P(y)+P(x)\ast_\lambda y+\lambda x\ast_\lambda y\Big)-x\ast_\lambda P(1)\ast_\lambda y\\
&=&P\Big(P(x)P(y)+P(x)P(y)\Big)+\lambda P(x)P(y)-(P^2(x)P(y)+\lambda P(x)P(y))\\
&=&P(P(x)P(y)+P(x)P(y))-P(P(x)P(y))\\
&=&P(P(x)P(y)).
\end{eqnarray*}
This gives
$$P(x)\ast_\lambda P(y)=P\Big(x\ast_\lambda P(y)+P(x)\ast_\lambda y+\lambda x\ast_\lambda y\Big)-x\ast_\lambda P(1)\ast_\lambda y,$$
proving Item~(\mref{it:mltd}).
\end{proof}

\subsection{The construction of free commutative $\lambda$-TD algebras}

The explicit construction of  free  commutative  TD algebras on a commutative algebra $A$ was carried out in ~\mcite{EL} by using  generalized shuffle products. This section will investigate the construction of the free commutative $\lambda$-TD algebra on $A$ by another generalized shuffle product. We first give the notion of the free commutative $\lambda$-TD algebra on a commutative algebra.
\begin{defn}
{\rm
Let $A$ be a commutative algebra. A free commutative \rt
algebra on $A$ is a commutative \rt algebra $\FR(A)$ with a
\rt operator $P_L$ and an algebra homomorphism $j_A:
A\to \FR(A)$ such that, for any commutative \rt algebra $(R,P)$ and
any  algebra homomorphism $f:A\to R$, there is a unique
 \rt algebra homomorphism $\free{f}: \FR(A)\to R$
such that $f=\free{f}\circ j_A$, that is, the following diagram
$$ \xymatrix{ A \ar[rr]^{j_A}\ar[drr]^{f} && \FR(A) \ar[d]_{\free{f}} \\
&& R}
$$
commutes.}
\mlabel{defn:crbtd}
\end{defn}
For a given unital commutative algebra $A$ with unit $1_A$,
the free commutative Rota-Baxter algebra on $A$  is  given by the quasi-shuffle or mixable shuffle product in ~\mcite{Gub}, and the free commutative TD algebra on $A$ is given by the left-shift shuffle product in \mcite{EL}.

Let
\begin{equation}
\shrp(A):=\bigoplus_{n\geq 0}A^{\ot n}. \notag
\end{equation}
Here $A^{\ot n}$ is the $n$-th tensor power of $A$   with the convention that $A^{\ot 0}=\bfk $.
We next generalize the quasi-shuffle product and left-shift shuffle product by combining them together.
For $\fraka=a_1\ot \cdots \ot a_m\in A^{\ot m}$ and $\frakb=b_1\ot \cdots \ot b_n\in A^{\ot n}$ with $m,n\geq0$, denote $\fraka'=a_2\ot \cdots \ot a_m$ if $m\geq 1$ and $\frakb'=b_2\ot \cdots \ot b_n$ if $n\geq 1$, so that $\fraka =a_1\ot \fraka'$ and $\frakb=b_1\ot \frakb'$.
Define a binary operation $\shar$ on $\shrp(A)$  as follows.  If $m=0$ or $n=0$, that is, $\fraka=c\in \bfk$ or $\frakb=c\in \bfk$, we define $\fraka\shar \frakb$ to be the scalar product: $\fraka\shar \frakb =c\frakb$ or $\fraka \shar\frakb =c\fraka$. If $m\geq 1$ and $n\geq 1$,  we define
\begin{equation}
\fraka\shar \frakb =
a_1\ot ( \fraka' \shar \frakb) + b_1\ot (\fraka \shar \frakb')
+\lambda a_1b_1\ot(\fraka'\shar \frakb')-  a_1b_1\ot((\fraka' \shar 1_A)\shar \frakb').
\mlabel{eq:dfndr}
\end{equation}
Then we extend the product of two pure tensors to a binary operation on $\shrp(A)$ by bilinearity, called the {\bf $\lambda$-TD shuffle product}.
\begin{exam}
Let $\fraka=a_1$ and $\frakb=b_1\ot b_2$. Then
\begin{eqnarray*}
\fraka\shar \frakb&=&a_1\ot b_1\ot b_2 +b_1\ot (a_1\shar b_2)+\lambda a_1b_1\ot b_2-a_1b_1\ot (1_A\shar b_2)\\
&=&a_1\ot b_1\ot b_2+b_1\ot( a_1\ot b_2+b_2\ot a_1+\lambda a_1b_2-a_1b_2\ot 1_A)\\
&&+\lambda a_1b_1\ot b_2-a_1b_1\ot(1_A\ot b_2+b_2\ot 1_A+\lambda b_2- b_2\ot 1_A) \\
&=&a_1\ot b_1\ot b_2+b_1\ot a_1\ot b_2+b_1\ot b_2\ot a_1+\lambda b_1\ot a_1b_2\\
&&-b_1\ot a_1b_2\ot 1_A-a_1b_1\ot 1_A\ot b_2.
\end{eqnarray*}
\end{exam}

We next give some properties of $\lambda$-TD shuffle product $\shar$ for proving that it  satisfies the commutativity and associativity.
\begin{lemma} Let $\fraka=a_1\ot\cdots\ot a_n\in A^{\ot n}$. Then
\begin{equation}
1_A\shar \fraka=\fraka\shar 1_A=\left\{
\begin{array}{lll}
c1_A, & \text{if}\, \fraka=c\in A^{\ot 0};\\
1_A\ot \fraka +\lambda \fraka. &\text{if}\, \fraka\in A^{\ot n}\,\, \text{for}\,\, n\geq 1.
\end{array}\right.
\mlabel{eq:1a}
\end{equation}
\mlabel{lem:1a}
\end{lemma}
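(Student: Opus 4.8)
The plan is to obtain both equalities from a \emph{single} unfolding of the defining recursion Eq.~(\mref{eq:dfndr}), using only that $1_A$ lies in $A^{\ot 1}$ and so splits as $1_A=1_A\ot 1$ with scalar tail $1\in\bfk=A^{\ot 0}$, together with the scalar clause $1\shar\frakc=\frakc$ and unitality $1_Aa_1=a_1$ in $A$. First I would dispatch the degenerate case $\fraka=c\in A^{\ot 0}$: the scalar-product part of the definition gives immediately $1_A\shar c=c\,1_A=c\shar 1_A$, matching the first branch of Eq.~(\mref{eq:1a}), so nothing further is needed there.

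For the main case $n\geq 1$ I would compute $1_A\shar\fraka$ by substituting head $1_A$ and tail $1$ for the first factor and $\fraka=a_1\ot\fraka'$ for the second into Eq.~(\mref{eq:dfndr}). Simplifying the four resulting terms with $1\shar\frakc=\frakc$ and $1_Aa_1=a_1$, the first term becomes $1_A\ot\fraka$ and the third becomes $\lambda\fraka$, while the second and fourth terms both reduce, after $1\shar 1_A=1_A$, to $\pm\,a_1\ot(1_A\shar\fraka')$ and therefore cancel. This already yields $1_A\shar\fraka=1_A\ot\fraka+\lambda\fraka$. The analogous substitution for $\fraka\shar 1_A$, where now the unit is the second factor with tail $1$, produces $1_A\ot\fraka$ from the second term, $\lambda\fraka$ from the third, and the cancelling pair $\pm\,a_1\ot(\fraka'\shar 1_A)$ from the first and fourth terms; hence $\fraka\shar 1_A=1_A\ot\fraka+\lambda\fraka$ as well, and the two sides of Eq.~(\mref{eq:1a}) agree.

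The point I want to stress is that no induction is needed: the troublesome summands containing the still-unevaluated products $1_A\shar\fraka'$ and $\fraka'\shar 1_A$ cancel \emph{syntactically}, so their actual values never enter the computation. The only real care required is the bookkeeping around the scalar component $A^{\ot 0}=\bfk$ — correctly reading $\shar$ against a scalar argument, and treating the tail of $1_A$ as the scalar unit $1$ rather than as an empty tensor — together with the repeated use of unitality in the form $1_Aa_1=a_1$ to collapse the coefficients arising in the third and fourth terms. This scalar bookkeeping is the main (and essentially the only) obstacle; if one nevertheless prefers a fully inductive formulation, an induction on $n$ with base case $n=1$ goes through verbatim, but the exact cancellation makes it superfluous.
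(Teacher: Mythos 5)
Your proposal is correct and follows essentially the same route as the paper: both handle the scalar case by the definition of $\shar$ and then, for $n\geq 1$, expand $1_A\shar\fraka$ and $\fraka\shar 1_A$ once via Eq.~(\mref{eq:dfndr}), observing that the two summands involving the unevaluated products $1_A\shar\fraka'$ (resp.\ $\fraka'\shar 1_A$) cancel, so no induction is required. The paper's proof is exactly this direct computation.
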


\begin{proof}
Let $\fraka\in A^{\ot n}$. For $n=0$, let $\fraka=c\in \bfk$. Then $1_A\shar \fraka=c 1_A=\fraka \shar 1_A$ by the definition of $\shar$. For $n\geq 1$, we let $\fraka=a_1\ot \fraka'$, where $\fraka'=a_2\ot \cdots\ot a_n$.  Then by Eq.~(\mref{eq:dfndr})
\begin{eqnarray*}
1_A\shar \fraka&=&1_A\shar (a_1\ot \fraka')\\
&=&1_A\ot a_1\ot\fraka'+a_1\ot (1_A \shar \fraka')+\lambda a_1\ot \fraka'-a_1\ot (1_A\shar \fraka')\\
&=&1_A\ot\fraka+\lambda \fraka.
\end{eqnarray*}
On the other hand,
\begin{eqnarray*}
\fraka\shar 1_A&=&(a_1\ot \fraka')\shar 1_A\\
&=& a_1\ot(\fraka'\shar 1_A)+1_A \ot a_1\ot \fraka'+\lambda a_1\ot \fraka'-a_1\ot (\fraka'\shar 1_A)\\
&=&1_A\ot\fraka+\lambda \fraka.
\end{eqnarray*}
Thus Eq.~(\mref{eq:1a}) follows.
\end{proof}

\begin{lemma}Let $\fraka=a_1\ot a_2\ot \cdots\ot a_m\in A^{\ot m}$ and $\frakb=b_1\ot b_2\ot \cdots\ot b_n\in A^{\ot n}$ for $m,n\geq 1$. Then
\begin{equation}
(1_A\ot \fraka)\shar \frakb=\fraka\shar (1_A\ot \frakb).
\mlabel{eq:ash}
\end{equation}
\end{lemma}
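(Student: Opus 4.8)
The plan is to argue by induction on $m$, with $n\geq 1$ arbitrary, expanding each side of Eq.~(\mref{eq:ash}) by a single application of the defining recursion Eq.~(\mref{eq:dfndr}). Reading $1_A\ot\fraka$ as a tensor with leading factor $1_A$ and remaining part $\fraka$, and writing $\frakb=b_1\ot\frakb'$, the left-hand side would expand (using $1_A b_1=b_1$) as
\begin{align*}
(1_A\ot\fraka)\shar\frakb &= 1_A\ot(\fraka\shar\frakb) + b_1\ot\big((1_A\ot\fraka)\shar\frakb'\big) \\
&\quad + \lambda b_1\ot(\fraka\shar\frakb') - b_1\ot\big((\fraka\shar 1_A)\shar\frakb'\big).
\end{align*}
Dually, reading $1_A\ot\frakb$ with leading factor $1_A$ and remaining part $\frakb$, and writing $\fraka=a_1\ot\fraka'$, the right-hand side would expand (using $a_1 1_A=a_1$) as
\begin{align*}
\fraka\shar(1_A\ot\frakb) &= a_1\ot\big(\fraka'\shar(1_A\ot\frakb)\big) + 1_A\ot(\fraka\shar\frakb) \\
&\quad + \lambda a_1\ot(\fraka'\shar\frakb) - a_1\ot\big((\fraka'\shar 1_A)\shar\frakb\big).
\end{align*}
The term $1_A\ot(\fraka\shar\frakb)$ appears on both sides and cancels.

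After this cancellation it remains to prove that
\[
b_1\ot\Big[(1_A\ot\fraka)\shar\frakb' + \lambda(\fraka\shar\frakb') - (\fraka\shar 1_A)\shar\frakb'\Big]
\]
equals
\[
a_1\ot\Big[\fraka'\shar(1_A\ot\frakb) + \lambda(\fraka'\shar\frakb) - (\fraka'\shar 1_A)\shar\frakb\Big],
\]
and I claim that in fact both bracketed factors vanish identically, so that each display is $0$. For the first bracket, Lemma~\mref{lem:1a} gives $\fraka\shar 1_A=1_A\ot\fraka+\lambda\fraka$ (valid since $m\geq 1$), whence by bilinearity $(\fraka\shar 1_A)\shar\frakb'=(1_A\ot\fraka)\shar\frakb'+\lambda(\fraka\shar\frakb')$; substituting this, all terms cancel. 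This step needs no induction, and it also covers the degenerate case $n=1$, where $\frakb'=1\in\bfk$ and each $\shar$ with $\frakb'$ collapses to scalar multiplication.

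For the second bracket I would distinguish two cases. If $m\geq 2$, then $\fraka'\in A^{\ot(m-1)}$ with $m-1\geq 1$, so Lemma~\mref{lem:1a} yields $(\fraka'\shar 1_A)\shar\frakb=(1_A\ot\fraka')\shar\frakb+\lambda(\fraka'\shar\frakb)$, and the inductive hypothesis applied to the shorter pair $(\fraka',\frakb)$ gives $(1_A\ot\fraka')\shar\frakb=\fraka'\shar(1_A\ot\frakb)$; the bracket then collapses to $0$. If $m=1$, then $\fraka'=1\in\bfk$, so $\fraka'\shar(1_A\ot\frakb)=1_A\ot\frakb$, $\fraka'\shar\frakb=\frakb$, and $(\fraka'\shar 1_A)\shar\frakb=1_A\shar\frakb=1_A\ot\frakb+\lambda\frakb$ by Lemma~\mref{lem:1a}; again the bracket is $0$, which simultaneously disposes of the base case of the induction. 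The only point requiring care is exactly this second bracket: the induction enters solely through the identity for the shorter tensor $\fraka'$, and the degenerate case $m=1$ must be separated because there Lemma~\mref{lem:1a} is used in its $A^{\ot 0}$ form rather than its generic form. Beyond this bookkeeping there is no genuine obstacle, as everything else is direct substitution from Eq.~(\mref{eq:dfndr}) and Lemma~\mref{lem:1a}.
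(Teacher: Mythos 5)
Your proof is correct and follows essentially the same route as the paper's: expand each side once via Eq.~(\mref{eq:dfndr}), use Lemma~\mref{lem:1a} to rewrite the $\shar\, 1_A$ terms, and invoke the induction hypothesis only on the pair $(\fraka',\frakb)$; the paper merely inducts on $m+n$ rather than on $m$ and phrases the conclusion as both sides equalling $1_A\ot(\fraka\shar\frakb)$ instead of cancelling the common term. Your argument in fact isolates the identity $(1_A\ot\fraka)\shar\frakb=1_A\ot(\fraka\shar\frakb)$, which is exactly the byproduct the paper records as Eq.~(\mref{eq:oneoab}).
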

\begin{proof}
We prove the claim by induction on $m+n\geq 2$. For $m+n=2$, we have $m=n=1$, and so
\begin{eqnarray*}
(1_A\ot a_1)\shar b_1 &=& 1_A\ot (a_1\shar b_1)+b_1\ot 1_A\ot a_1+\lambda b_1\ot a_1-b_1\ot (a_1\shar 1_A)\quad(\text{by Eq.~(\mref{eq:dfndr})})\\
&=& 1_A\ot (a_1\shar b_1)+b_1\ot 1_A\ot a_1+\lambda b_1\ot a_1-b_1\ot (1_A\ot a_1+\lambda a_1)\quad(\text{by Eq.~(\mref{eq:1a})})\\
&=&1_A\ot (a_1\shar b_1).
\end{eqnarray*}
Likewise,
\begin{eqnarray*}
a_1\shar (1_A\ot b_1) &=& a_1\ot 1_A\ot b_1+1_A\ot (a_1\shar b_1)+\lambda a_1\ot b_1-a_1\ot (1_A\shar b_1)\quad(\text{by Eq.~(\mref{eq:dfndr})})\\
&=&  a_1\ot 1_A\ot b_1+1_A\ot (a_1\shar b_1)+\lambda a_1\ot b_1-a_1\ot (1_A\ot b_1+\lambda b_1)\quad(\text{by Eq.~(\mref{eq:1a})})\\
&=&1_A\ot (a_1\shar b_1).
\end{eqnarray*}
Thus Eq.~(\mref{eq:ash}) follows.
Assume that the claim holds for $m+n\leq k$ with $k\geq 2$. Consider $m+n=k+1$. Let $\fraka=a_1\ot \fraka'$ and $\frakb=b_1\ot\frakb'$. By Eqs.~(\mref{eq:dfndr}) and ~(\mref{eq:1a}), we get
\begin{eqnarray*}
(1_A\ot \fraka)\shar \frakb&=& 1_A\ot (\fraka\shar\frakb)+b_1\ot ((1_A\ot \fraka)\shar \frakb')+\lambda b_1\ot (\fraka\shar\frakb')- b_1\ot ((\fraka\shar 1_A)\shar \frakb')\\
&=& 1_A\ot (\fraka\shar\frakb)+b_1\ot ((1_A\ot \fraka)\shar \frakb')+\lambda b_1\ot (\fraka\shar\frakb')- b_1\ot (( 1_A\ot\fraka+\lambda\fraka)\shar \frakb')\\
&=& 1_A\ot (\fraka\shar\frakb).
\end{eqnarray*}
On the other hand,
\begin{eqnarray*}
\fraka\shar (1_A\ot \frakb)&=& a_1\ot (\fraka'\shar(1_A\ot\frakb))+1_A\ot (\fraka\shar \frakb)+\lambda a_1\ot (\fraka'\shar\frakb)- a_1\ot ((\fraka'\shar 1_A)\shar \frakb)\\
&=&a_1\ot (\fraka'\shar(1_A\ot\frakb))+1_A\ot (\fraka\shar \frakb)+\lambda a_1\ot (\fraka'\shar\frakb)- a_1\ot ((1_A\ot \fraka'+\lambda \fraka' )\shar\frakb)\\
&=&a_1\ot (\fraka'\shar(1_A\ot\frakb))+ 1_A\ot (\fraka\shar\frakb)-a_1\ot ((1_A\ot \fraka')\shar\frakb)\\
&=&1_A\ot (\fraka\shar\frakb).\quad(\text{by the induction hypothesis})
\end{eqnarray*}
Then Eq.~(\mref{eq:ash}) holds.  Induction on $m+n$  completes the proof of the claim.
\end{proof}
From the proof of the above lemma, we also obtain
\begin{equation}
(1_A\ot\fraka)\shar \frakb=1_A\ot(\fraka \shar\frakb),\quad\text{for all}\, \fraka\in A^{\ot m}, \frakb\in A^{\ot n}.
\mlabel{eq:oneoab}
\end{equation}
\begin{lemma}The \rt shuffle product $\shar$ on $\shrp(A)$ is commutative.
\mlabel{lem:comm}
\end{lemma}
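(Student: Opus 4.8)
The plan is to prove $\fraka\shar\frakb=\frakb\shar\fraka$ by induction on the total tensor degree $m+n$, after using bilinearity to reduce to pure tensors $\fraka=a_1\ot\cdots\ot a_m\in A^{\ot m}$ and $\frakb=b_1\ot\cdots\ot b_n\in A^{\ot n}$. When $m=0$ or $n=0$ the product $\shar$ is a scalar multiplication, which is visibly symmetric; this disposes of those cases and the induction genuinely begins at $m=n=1$. There a direct expansion via Eq.~(\mref{eq:dfndr}), together with the unit values supplied by Lemma~\mref{lem:1a}, yields $a_1\shar b_1=a_1\ot b_1+b_1\ot a_1+\lambda a_1b_1-a_1b_1\ot 1_A$, which is symmetric in $a_1$ and $b_1$ because $A$ is commutative.

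For the inductive step with $m,n\geq 1$, I would expand both $\fraka\shar\frakb$ and $\frakb\shar\fraka$ through Eq.~(\mref{eq:dfndr}) and compare the four summands pairwise. The first two terms of $\fraka\shar\frakb$, namely $a_1\ot(\fraka'\shar\frakb)$ and $b_1\ot(\fraka\shar\frakb')$, agree with the second and first terms of $\frakb\shar\fraka$ by the inductive hypothesis applied in total degree $m+n-1$ (using $\fraka'\shar\frakb=\frakb\shar\fraka'$ and $\fraka\shar\frakb'=\frakb'\shar\fraka$). The third terms match at once from $a_1b_1=b_1a_1$ and the inductive hypothesis $\fraka'\shar\frakb'=\frakb'\shar\fraka'$ in degree $m+n-2$.

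The crux, and the one summand that an ordinary shuffle product does not carry, is the fourth term: I must establish $(\fraka'\shar 1_A)\shar\frakb'=(\frakb'\shar 1_A)\shar\fraka'$ so that $-a_1b_1\ot((\fraka'\shar 1_A)\shar\frakb')$ matches $-b_1a_1\ot((\frakb'\shar 1_A)\shar\fraka')$. When $\fraka'$ and $\frakb'$ are both nonempty, Lemma~\mref{lem:1a} rewrites $\fraka'\shar 1_A=1_A\ot\fraka'+\lambda\fraka'$, and then the absorption identity~(\mref{eq:oneoab}) collapses $(1_A\ot\fraka')\shar\frakb'$ to $1_A\ot(\fraka'\shar\frakb')$, giving $(\fraka'\shar 1_A)\shar\frakb'=1_A\ot(\fraka'\shar\frakb')+\lambda(\fraka'\shar\frakb')$; the symmetric computation together with the inductive hypothesis $\fraka'\shar\frakb'=\frakb'\shar\fraka'$ then closes this case. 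The remaining possibilities, in which one of $\fraka',\frakb'$ is the empty tensor (a scalar), are handled by the same two tools, now invoking the scalar clause of the product alongside Lemma~\mref{lem:1a}, and no longer even require the inductive hypothesis. I expect this fourth-term identity to be the main obstacle, precisely because it is what forces us to import both the unit computation of Lemma~\mref{lem:1a} and the identity~(\mref{eq:oneoab}) rather than relying on the recursion alone; once it is in hand, the four-term comparison completes the induction.
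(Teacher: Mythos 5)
Your proof is correct and follows essentially the same route as the paper: induction on $m+n$, expansion through the recursion~(\mref{eq:dfndr}), and Lemma~\mref{lem:1a} to handle the unit term. The only immaterial difference is that for the fourth summand you invoke Eq.~(\mref{eq:oneoab}) to collapse $(1_A\ot\fraka')\shar\frakb'$ to $1_A\ot(\fraka'\shar\frakb')$, whereas the paper cancels the $\lambda$-terms against the third summand and matches what remains via Eq.~(\mref{eq:ash}); both identities come from the same auxiliary lemma, so the arguments are interchangeable.
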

\begin{proof}
It suffices to prove
\begin{equation}
\fraka\,\shar \,\frakb= \frakb\, \shar \,\fraka,
\mlabel{eq:shcom}
\end{equation}
for all pure tensors $\fraka:=a_1\ot a_2\ot\cdots\ot a_m\in A^{\ot m}$ and $\frakb:=b_1\ot b_2\ot\cdots\ot b_n\in A^{\ot n}$ with $m,n\geq 0$.
Use induction on $m+n\geq 0$. If $m=0$, or $n=0$, then $\fraka=c\in \bfk$, or $\frakb=c\in\bfk$, and so $\fraka \shar \frakb=\frakb\shar\fraka $ by the definition of $\shar$.  If $m\geq 1$ and $n\geq 1$, we let $\fraka=a_1\ot\fraka'$ with $\fraka'\in A^{\ot {(m-1)}}$ and $\frakb=b_1\ot\frakb'$ with $\frakb'\in A^{\ot{(n-1)}}$.  Assume that Eq.~(\mref{eq:shcom}) holds for $ m+n\leq k$.  Consider $m+n=k+1$. Then by Eq.~(\mref{eq:dfndr}), we get
\begin{equation*}
  \fraka\, \shar \frakb =
a_1\ot ( \fraka' \shar \frakb) + b_1\ot (\fraka \shar \frakb')
+\lambda a_1b_1\ot(\fraka'\shar \frakb')-  a_1b_1\ot((\fraka' \shar 1_A)\shar \frakb').
\end{equation*}
By Eq.~(\mref{eq:1a}), we obtain $\fraka' \shar 1_A=1_A\ot\fraka'+\lambda\fraka'$, and then using the induction hypothesis, we have
\begin{equation*}
\fraka\, \shar \frakb =
a_1\ot ( \frakb\shar \fraka' ) + b_1\ot (\frakb' \shar \fraka)-  a_1b_1\ot(\frakb'\shar(1_A\ot\fraka')).
\end{equation*}
By Eq.~(\mref{eq:dfndr}) again, we get
\begin{equation*}
  \frakb\,\shar \fraka =
b_1\ot ( \frakb' \shar \fraka) + a_1\ot (\frakb \shar \fraka')
+\lambda b_1a_1\ot(\frakb'\shar \fraka')-  b_1a_1\ot((\frakb' \shar 1_A)\shar \fraka').
\end{equation*}
Applying Eq.~(\mref{eq:1a}) gives $\frakb' \shar 1_A=1_A\ot\frakb'+\lambda\frakb'$, and  so
\begin{equation*}
\frakb\,\shar \fraka =
b_1\ot ( \frakb' \shar \fraka) + a_1\ot (\frakb \shar \fraka')-  b_1a_1\ot((1_A\ot\frakb')\shar \fraka').
\end{equation*}
Then Eq.~(\mref{eq:shcom}) follows from the commutativity of $A$ and Eq.~(\mref{eq:ash}). This completes the induction and the proof of the lemma.
\end{proof}
\begin{lemma}The \rt shuffle product $\shar$ on $\shrp(A)$ is associative.
\mlabel{lem:asso}
\end{lemma}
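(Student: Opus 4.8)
The plan is to prove $(\fraka\shar\frakb)\shar\frakc=\fraka\shar(\frakb\shar\frakc)$ for all pure tensors $\fraka\in A^{\ot m}$, $\frakb\in A^{\ot n}$, $\frakc\in A^{\ot p}$ by induction on the total degree $m+n+p$, and then to extend to all of $\shrp(A)$ by bilinearity. The base cases are those in which at least one of $m,n,p$ equals $0$, so that the corresponding factor is a scalar $c\in\bfk$; since $\shar$ against a scalar is just the $\bfk$-action, each side collapses to $c$ times the $\shar$-product of the remaining two factors, and associativity is immediate.

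Before the generic step it is convenient to record a simplification of the defining recursion~\eqref{eq:dfndr}. When $m\geq 2$ the tail $\fraka'$ has degree at least one, so Lemma~\ref{lem:1a} gives $\fraka'\shar 1_A=1_A\ot\fraka'+\lambda\fraka'$; substituting this into the last summand of~\eqref{eq:dfndr} and using~\eqref{eq:oneoab} to pull the inserted $1_A$ to the front, the two terms carrying the factor $\lambda a_1b_1$ cancel and one is left with the three-term formula
\begin{equation*}
\fraka\shar\frakb=a_1\ot(\fraka'\shar\frakb)+b_1\ot(\fraka\shar\frakb')-a_1b_1\ot 1_A\ot(\fraka'\shar\frakb').
\end{equation*}
By the commutativity established in Lemma~\ref{lem:comm} the same formula is available whenever $n\geq 2$, so only the degenerate case $m=n=1$ retains an explicit weight term. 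This already tames much of the apparent complexity of~\eqref{eq:dfndr}.

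For the inductive step with $m,n,p\geq 1$ I would expand $(\fraka\shar\frakb)\shar\frakc$ by first rewriting $\fraka\shar\frakb$ via~\eqref{eq:dfndr} and then applying~\eqref{eq:dfndr} once more to each resulting product with $\frakc$, and symmetrically for $\fraka\shar(\frakb\shar\frakc)$. After one such expansion every inner product that appears has total degree strictly smaller than $m+n+p$, so the induction hypothesis applies and lets me bring each side into a common normal form in which the heads $a_1,b_1,c_1$ and the contracted products $a_1b_1$, $b_1c_1$, $a_1b_1c_1$ are displayed explicitly. Matching the two expansions term by term, using the commutativity of $A$ to identify the contracted factors, then yields the identity.

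The main obstacle is the $1_A$-insertion term, i.e. the last summand of~\eqref{eq:dfndr}: because it raises the tensor degree by one, the induction hypothesis cannot be applied to it directly. The remedy is precisely the auxiliary identities~\eqref{eq:ash} and~\eqref{eq:oneoab} together with Lemma~\ref{lem:1a}, which allow each inserted $1_A$ to be moved to the front of a factor of strictly lower degree before the hypothesis is invoked. Keeping careful track of these insertions—especially in the mixed terms where an inserted $1_A$ meets a head coming from $\frakc$, and in the degree-one tails where the clean three-term recursion above is unavailable—is where essentially all of the bookkeeping lies; once the insertions have been normalized, the final comparison of the two sides is a routine, if lengthy, term-by-term check.
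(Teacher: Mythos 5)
Your plan is correct and follows the same inductive skeleton as the paper's proof: induction on the total length $m+n+\ell$, scalar base cases, expansion of both sides via the recursion \eqref{eq:dfndr}, and normalization of the inserted $1_A$'s through Lemma~\ref{lem:1a} and Eqs.~\eqref{eq:ash}--\eqref{eq:oneoab} before the induction hypothesis is invoked. The genuine difference is your preliminary reduction: substituting $\fraka'\shar 1_A=1_A\ot\fraka'+\lambda\fraka'$ into the last summand of \eqref{eq:dfndr} and pulling the inserted $1_A$ to the front via \eqref{eq:oneoab} cancels the weight term and leaves the three-term recursion $\fraka\shar\frakb=a_1\ot(\fraka'\shar\frakb)+b_1\ot(\fraka\shar\frakb')-a_1b_1\ot 1_A\ot(\fraka'\shar\frakb')$, valid whenever $m\geq 2$ or $n\geq 2$; this identity is correct and its derivation uses only results already established. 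The paper does not perform this cancellation: it carries all four summands through the double expansion and matches thirteen terms on each side via an explicit permutation $\sigma\in\Sigma_{13}$, whereas your normal form produces seven terms per side whose matching is essentially forced by the induction hypothesis together with \eqref{eq:ash} and \eqref{eq:oneoab}. What your route costs is the extra case analysis you already flag: products in which both factors are single tensors (the case $m=n=1$, and inner products some of whose components land in $A^{\ot 1}$) still require the full four-term recursion, so the deferred ``term-by-term check'' is not uniform and must be split according to which factors have length one. Provided that bookkeeping is carried out, the argument goes through and is somewhat lighter than the paper's.
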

\begin{proof}
To show that the associativity of $\shar$, we need only prove
\begin{equation}
(\fraka \shar \frakb)\shar \frakc=\fraka\shar(\frakb\shar\frakc),
\mlabel{eq:asso}
\end{equation}
for all pure tensors $\fraka\in A^{\ot m},\frakb\in A^{\ot n},\frakc \in A^{\ot \ell}$ with $m,n,\ell\geq 0$. Use induction on $s:=m+n+\ell\geq 0$.
If one of $m,n,\ell$ is $0$, then
Eq.~(\mref{eq:asso}) is true by the definition of $\shar$. This proves Eq.~(\mref{eq:asso}) for $0\leq s\leq 2$. Assume that Eq.~(\mref{eq:asso}) holds for $s\leq k$ with $k\geq 2$, and consider $s=m+n+\ell=k+1$ with $m,n,\ell\geq 1$. Denote $\fraka=a_1\ot \fraka',\frakb=b_1\ot \frakb'$, and $\frakc=c_1\ot \frakc'$ with $\fraka'\in A^{\ot (m-1)},\frakb'\in A^{\ot (n-1)}, \frakc'\in A^{\ot (\ell-1)}$.
Then we have
\begin{eqnarray*}
(\fraka\shar \frakb)\shar \frakc
 &=&\Big(a_1\ot(\fraka'\shar\frakb)\Big)\shar\frakc+\Big(b_1\ot(\fraka\shar\frakb')\Big) \shar\frakc\\
&\;&+\Big(\lambda a_1b_1\ot(\fraka'\shar\frakb')\Big)\shar\frakc-\bigg(a_1b_1\ot\Big(\big(\fraka'\shar1_A\big)\shar\frakb'\Big)\bigg)\shar\frakc \quad(\text{by Eq.~(\mref{eq:dfndr})})\\
&=&a_1\ot\Big(\big(\fraka'\shar\frakb\big)\shar\frakc\Big)+c_1\ot\bigg(\Big(a_1\ot\big(\fraka'\shar\frakb\big)\Big)\shar\frakc'\bigg)
+\lambda a_1c_1\ot\Big(\big(\fraka'\shar\frakb\big)\shar\frakc'\Big)\\
&&-a_1c_1\ot\bigg(\Big(\big(\fraka'\shar\frakb\big)\shar1_A\Big)\shar\frakc'\bigg)
+b_1\ot\Big(\big(\fraka\shar\frakb'\big)\shar\frakc\Big)
+c_1\ot\bigg(\Big(b_1\ot\big(\fraka\shar\frakb'\big)\Big)\shar\frakc'\bigg)\\
&&+\lambda b_1c_1\ot\Big(\big(\fraka\shar\frakb'\big)\shar\frakc'\Big)-b_1c_1\ot\bigg(\Big(\big(\fraka\shar\frakb'\big)\shar1_A\Big)\shar\frakc'\bigg)
+\lambda a_1b_1\ot\Big(\big(\fraka'\shar\frakb'\big)\shar \frakc\Big)\\
&&+c_1\ot\Big(\Big(\lambda a_1b_1\ot (\fraka'\shar \frakb')\Big)\shar \frakc'\Big)
+\lambda^2 a_1b_1c_1\ot \Big(\big(\fraka'\shar\frakb'\big)\shar\frakc'\Big)\\
&&-\lambda a_1b_1c_1\ot\bigg(\Big(\big(\fraka'\shar\frakb'\big)\shar 1_A\Big)\shar\frakc'\bigg)
-a_1b_1\ot\bigg(\Big(\big(\fraka'\shar1_A\big)\shar\frakb'\Big)\shar\frakc\bigg)\\
&&-c_1\ot\bigg(\Big(a_1b_1\ot\Big(\big(\fraka'\shar1_A\big)\shar\frakb'\Big)\Big)\shar\frakc'\bigg)
-\lambda a_1b_1c_1\ot\bigg(\Big(\big(\fraka'\shar1_A\big)\shar\frakb'\Big)\shar\frakc'\bigg)\\
&&+ a_1b_1c_1\ot\bigg(\bigg(\Big(\big(\fraka'\shar1_A\big)\shar\frakb'\Big)\shar 1_A\bigg)\shar\frakc'\bigg).
\end{eqnarray*}
Combining the second, sixth, tenth and fourteenth terms and by Eq.~(\mref{eq:dfndr}), we obtain
\begin{eqnarray*}
(\fraka\shar \frakb)\shar \frakc
&=&a_1\ot\Big(\big(\fraka'\shar\frakb\big)\shar\frakc\Big)+c_1\ot\bigg(\big(\fraka\shar\frakb\big)\shar\frakc'\bigg)
+\lambda a_1c_1\ot\Big(\big(\fraka'\shar\frakb\big)\shar\frakc'\Big)\\
&&-a_1c_1\ot\bigg(\Big(\big(\fraka'\shar\frakb\big)\shar1_A\Big)\shar\frakc'\bigg)
+b_1\ot\Big(\big(\fraka\shar\frakb'\big)\shar\frakc\Big)
+\lambda b_1c_1\ot\Big(\big(\fraka\shar\frakb'\big)\shar\frakc'\Big)\\
&&-b_1c_1\ot\bigg(\Big(\big(\fraka\shar\frakb'\big)\shar1_A\Big)\shar\frakc'\bigg)
+\lambda a_1b_1\ot\Big(\big(\fraka'\shar\frakb'\big)\shar \frakc\Big)
+\lambda^2 a_1b_1c_1\ot \Big(\big(\fraka'\shar\frakb'\big)\shar\frakc'\Big)\\
&&-\lambda a_1b_1c_1\ot\bigg(\Big(\big(\fraka'\shar\frakb'\big)\shar 1_A\Big)\shar\frakc'\bigg)
-a_1b_1\ot\bigg(\Big(\big(\fraka'\shar1_A\big)\shar\frakb'\Big)\shar\frakc\bigg)
\\
&&-\lambda a_1b_1c_1\ot\bigg(\Big(\big(\fraka'\shar1_A\big)\shar\frakb'\Big)\shar\frakc'\bigg)+ a_1b_1c_1\ot\bigg(\bigg(\Big(\big(\fraka'\shar1_A\big)\shar\frakb'\Big)\shar 1_A\bigg)\shar\frakc'\bigg).
\end{eqnarray*}
On the other hand,
\begin{eqnarray*}
\fraka\shar (\frakb \shar \frakc) &=&\fraka\shar\Big(b_1\ot\big(\frakb'\shar\frakc\big)\Big)+\fraka\shar\Big(c_1\ot\big(\frakb\shar\frakc'\big)\Big)\\
&&+\fraka\shar\Big(\lambda b_1c_1\ot\big(\frakb'\shar\frakc'\big)\Big)-\fraka\shar\bigg(b_1c_1\ot\Big(\big(\frakb'\shar1_A\big)\shar\frakc'\Big)\bigg)\quad(\text{by Eq.~(\mref{eq:dfndr})})\\
&=&a_1\ot\bigg(\fraka'\shar\Big(b_1\ot\big(\frakb'\shar\frakc\big)\Big)\bigg)+b_1\ot\Big(\fraka\shar\big(\frakb'\shar\frakc\big)\Big)
+\lambda a_1b_1\ot\Big(\fraka'\shar\big(\frakb'\shar\frakc\big)\Big)\\
&&-a_1b_1\ot\Big(\big(\fraka'\shar1_A\big)\shar\big(\frakb'\shar\frakc  \big)\Big)
+a_1\ot\bigg(\fraka'\shar\Big(c_1\ot\big(\frakb\shar\frakc'\big)\Big)\bigg)
+c_1\ot\Big(\fraka\shar\big(\frakb\shar\frakc'\big)\Big)\\
&&+\lambda a_1c_1\ot\Big(\fraka'\shar\big(\frakb\shar\frakc'\big)\Big)
-a_1c_1\ot\Big(\big(\fraka'\shar1_A\big)\shar\big(\frakb\shar\frakc'  \big)\Big)\\
&&+a_1\ot\bigg(\fraka'\shar\Big(\lambda b_1c_1\ot\big(\frakb'\shar\frakc'\big)\Big)\bigg)
+\lambda b_1c_1\ot\bigg(\fraka\shar\Big(\frakb'\shar\frakc'\Big)\bigg)\\
&&+\lambda^2 a_1b_1c_1\ot\bigg(\fraka'\shar\Big(\frakb'\shar\frakc'\Big)\bigg)-\lambda a_1b_1c_1\ot \bigg(\Big(\fraka'\shar 1_A\Big)\shar\Big(\frakb'\shar\frakc'\Big)\bigg)\\
&&-a_1\ot\bigg(\fraka'\shar \Big(b_1c_1\ot\Big(\big(\frakb'\shar1_A\big)\shar\frakc'\Big)\Big)\bigg)-b_1c_1\ot\bigg(\fraka\shar \Big(\big(\frakb'\shar1_A\big)\shar\frakc'\Big)\bigg)\\
&&-\lambda a_1b_1c_1\ot\bigg(\fraka'\shar \Big(\big(\frakb'\shar1_A\big)\shar\frakc'\Big)\bigg)+a_1b_1c_1\ot\bigg(\Big(\fraka'\shar 1_A\Big)\shar \Big(\big(\frakb'\shar1_A\big)\shar\frakc'\Big)\bigg)
\end{eqnarray*}

Adding the first, fifth, ninth and thirteenth terms and  then using Eq.~(\mref{eq:dfndr}) again, we have
\begin{eqnarray*}
\fraka\shar (\frakb \shar \frakc)
&=&a_1\ot\bigg(\fraka'\shar\big(\frakb\shar\frakc\big)\bigg)+b_1\ot\Big(\fraka\shar\big(\frakb'\shar\frakc\big)\Big)
+\lambda a_1b_1\ot\Big(\fraka'\shar\big(\frakb'\shar\frakc\big)\Big)\\
&&-a_1b_1\ot\Big(\big(\fraka'\shar1_A\big)\shar\big(\frakb'\shar\frakc  \big)\Big)
+c_1\ot\Big(\fraka\shar\big(\frakb\shar\frakc'\big)\Big)
+\lambda a_1c_1\ot\Big(\fraka'\shar\big(\frakb\shar\frakc'\big)\Big)\\
&&-a_1c_1\ot\Big(\big(\fraka'\shar1_A\big)\shar\big(\frakb\shar\frakc'  \big)\Big)
+\lambda b_1c_1\ot\bigg(\fraka\shar\Big(\frakb'\shar\frakc'\Big)\bigg)\\
&&+\lambda^2 a_1b_1c_1\ot\bigg(\fraka'\shar\big(\frakb'\shar\frakc'\big)\bigg)-\lambda a_1b_1c_1\ot \bigg(\Big(\fraka'\shar 1_A\Big)\shar\Big(\frakb'\shar\frakc'\Big)\bigg)\\
&&-b_1c_1\ot\bigg(\fraka\shar \Big(\big(\frakb'\shar1_A\big)\shar\frakc'\Big)\bigg)
-\lambda a_1b_1c_1\ot\bigg(\fraka'\shar \Big(\big(\frakb'\shar1_A\big)\shar\frakc'\Big)\bigg)\\
&&+a_1b_1c_1\ot\bigg(\Big(\fraka'\shar 1_A\Big)\shar \Big(\big(\frakb'\shar1_A\big)\shar\frakc'\Big)\bigg).
\end{eqnarray*}

Applying Eq.~(\mref{eq:1a}) and the induction hypothesis to the seventh term $$-a_1c_1\ot\Big(\big(\fraka'\shar1_A\big)\shar\big(\frakb\shar\frakc'  \big)\Big)$$
gives 
$$-a_1c_1\ot\Big(\big(\big(\fraka'\shar\frakb\big)\shar1_A\big)\shar\frakc'\Big).$$
Then by the induction hypothesis, we obtain
\begin{eqnarray*}
\fraka\shar (\frakb \shar \frakc)
&=&a_1\ot\bigg(\big(\fraka'\shar\frakb\big)\shar\frakc\bigg)+b_1\ot\Big(\big(\fraka\shar\frakb'\big)\shar\frakc\Big)
+\lambda a_1b_1\ot\Big(\big(\fraka'\shar\frakb'\big)\shar\frakc\Big)\\
&&-a_1b_1\ot\bigg(\Big(\big(\fraka'\shar1_A\big)\shar\frakb'\Big)\shar\frakc\bigg)
+c_1\ot\Big(\big(\fraka\shar\frakb\big)\shar\frakc'\Big)
+\lambda a_1c_1\ot\Big(\big(\fraka'\shar\frakb\big)\shar\frakc'\Big)\\
&&-a_1c_1\ot\Big(\Big(\big(\fraka'\shar\frakb\big)\shar 1_A\Big)\shar\frakc'\Big)
+\lambda b_1c_1\ot\bigg(\big(\fraka\shar\frakb'\big)\shar\frakc'\bigg)\\
&&+\lambda^2 a_1b_1c_1\ot\bigg(\big(\fraka'\shar\frakb'\big)\shar\frakc'\bigg)
-\lambda a_1b_1c_1\ot \bigg(\Big(\Big(\fraka'\shar 1_A\Big)\shar\frakb'\Big)\shar\frakc'\bigg)\\
&&-b_1c_1\ot\bigg(\Big(\big(\fraka\shar\frakb'\big)\shar1_A\Big)\shar\frakc'\bigg)
-\lambda a_1b_1c_1\ot\bigg(\Big(\big(\fraka'\shar\frakb'\big)\shar1_A\Big)\shar\frakc'\bigg)\\
&&+a_1b_1c_1\ot\bigg(\bigg(\Big(\big(\fraka'\shar 1_A\big)\shar\frakb'\Big)\shar1_A\bigg)\shar\frakc'\bigg).
\end{eqnarray*}
Then the $i$-th term in the expansion of $(\fraka\shar \frakb)\shar \frakc $ matches with the $\sigma(i)$-th term in the expansion of $\fraka\shar (\frakb \shar \frakc)$, where the permutation $\sigma\in \Sigma_{13}$ is
\begin{equation*}
\left ( \begin{array}{c} i\\\sigma(i)\end{array}\right)
= \left ( \begin{array}{ccccccccccccc} 1&2&3&4&5&6&7&8&9&10&11&12&13\\
    1&5&6&7&2&8&11&3&9&12&4&10&13\end{array} \right ).
\mlabel{eq:s14}
\end{equation*}
This completes the proof.
\end{proof}

\begin{prop}
The triple $(\shrp(A), \shar, 1_\bfk)$ forms a unitary commutative algebra.
\mlabel{prop:shrass}
\end{prop}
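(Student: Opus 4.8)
The plan is to assemble the pieces that have already been established, since the statement that $(\shrp(A),\shar,1_\bfk)$ is a unitary commutative algebra splits into three verifications: bilinearity/associativity, commutativity, and the existence of a two-sided identity. Because $\shrp(A)=\bigoplus_{n\ge 0}A^{\ot n}$ is by construction a free $\bfk$-module and $\shar$ was defined on pure tensors and then extended by bilinearity, the pair $(\shrp(A),\shar)$ is automatically a (not yet necessarily associative, commutative, or unital) $\bfk$-algebra. So the real content is only the three structural properties.

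For associativity and commutativity there is nothing left to prove: these are precisely the conclusions of Lemma~\mref{lem:asso} and Lemma~\mref{lem:comm}, which already carry the genuinely laborious part of the argument (in particular the thirteen-term matching governed by the permutation $\sigma$). I would simply invoke these two lemmas.

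The one point that requires care is the choice of unit. The correct candidate is $1_\bfk$, the identity of $\bfk=A^{\ot 0}$, and emphatically not $1_A\in A=A^{\ot 1}$: indeed Lemma~\mref{lem:1a} shows that $1_A\shar \fraka=1_A\ot\fraka+\lambda\fraka\ne \fraka$ in general, so $1_A$ is not a unit. To check that $1_\bfk$ works, I would use the scalar-product clause in the definition of $\shar$: for any pure tensor $\fraka\in A^{\ot n}$ (allowing $n=0$), taking the scalar $c=1$ gives $1_\bfk\shar \fraka=1\cdot \fraka=\fraka=\fraka\shar 1_\bfk$. Extending by bilinearity over all of $\shrp(A)$ then shows $1_\bfk$ is a two-sided identity, and combining the three verifications yields the proposition. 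I expect no real obstacle, as the difficult computations are already encapsulated in the two preceding lemmas; the only thing to watch is to avoid conflating $1_\bfk$ with $1_A$.
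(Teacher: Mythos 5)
Your proposal is correct and matches the paper's own proof, which simply cites Lemma~\mref{lem:comm} and Lemma~\mref{lem:asso}. Your extra care in verifying that the unit is $1_\bfk\in A^{\ot 0}$ (and not $1_A$, which by Lemma~\mref{lem:1a} fails to be a unit) is a small but welcome addition that the paper leaves implicit.
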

\begin{proof}
This follows from Lemma~\mref{lem:comm} and Lemma~\mref{lem:asso}.
\end{proof}

We next construct the free object of the category of \rt algebras on a commutative algebra $A$.  Let
\begin{equation}
\shat(A):=A\ot \shrp(A)=(A\ot \bfk) \oplus A^{\ot 2}\oplus\cdots (\cong \bigoplus_{n\geq 1}A^{\ot n}).
\mlabel{eq:defdr1}
\end{equation}
Here $A^{\ot n}$ is the $n$-th tensor power of $A$.

We first recall the definition of the {\bf right-shift operator $\pl$} on $\shat (A)$. Let $\fraka:=a_0\ot\fraka' \in \shat(A)$ for $\fraka'\in A^{\ot n}$ and all $n\geq 0$. If $n=0$, we let $\fraka'=c\in\bfk(=A^{\ot 0})$. Define
\begin{equation}
\pl:\shat (A)\to \shat (A), \quad \fraka\mapsto 1_A\ot \fraka, \,n\geq 1\quad\text{and}\quad\fraka\mapsto1_A\ot ca_0, \,n=0.
\end{equation}
We next define a multiplication $\dr$ on $\shat (A)$ as follows.  For this purpose, we just need to define the product of two pure tensors and then to  extend by bilinearity. For $\fraka:=a_0\ot\fraka'\in A\ot A^{\ot m}$ and $\frakb:=b_0\ot\frakb' \in A\ot A^{\ot n}$,  we define
\begin{equation}
\fraka\dr \frakb = a_0b_0\ot (\fraka' \shar \frakb'),
\mlabel{eq:dro}
\end{equation}
where $\shar$ is the \rt shuffle product defined in Eq.~(\mref{eq:dfndr}). Then the associativity  and commutativity of $\dr$  follows from  that of the multiplication in  $A$ and $\shar$ in $\shrp(A)$.

Alternatively,  let $ \shat(A)= \bigoplus\limits_{n\geq 1}A^{\ot n}$. For $\fraka=a_1\ot \cdots \ot a_m\in A^{\ot m}$ and $\frakb=b_1\ot\cdots\ot b_n \in A^{\ot n}, m,n\geq 1$, denote $\fraka'=a_2\ot \cdots \ot a_m$ if $m\geq 2$ and $\frakb'=b_2\ot \cdots \ot b_n$ if $n\geq 2$, so that $\fraka =a_1\ot \fraka'$ and $\frakb=b_1\ot \frakb'$.
Then  $\dr$ on $\shat(A)$  can also be defined by the following recursion.
\begin{equation}
\fraka\dr \frakb = \left \{\begin{array}{ll} a_1b_1, & m=n=1, \\
a_1b_1\ot \frakb', & m=1, n\geq 2, \\
a_1b_1\ot \fraka', & m\geq 2, n=1,\\
a_1b_1\ot \Big( \fraka' \dr (1_A\ot \frakb') + (1_A\ot \fraka')\dr \frakb'\\
+\lambda \fraka'\dr\frakb' -  \Big(\fraka' \dr \pl(1_A)\Big)\dr \frakb'\Big), & m, n\geq 2.
\end{array} \right .
\mlabel{eq:dfndr2}
\end{equation}

Let
$$j_A: A\to \shat (A),\,a\mapsto a,$$
be the natural embedding. Then
$$j_A(ab)=ab=a\dr b=j_A(a)\dr j_A(b),\,\tforall\, a,b\in A.$$
So $j_A$ is an algebra homomorphism.
\begin{theorem}
Let $A$ be a commutative algebra. Let $\shat (A),\pl$, $\dr$ and $j_A$ be defined as above.  Then
\begin{enumerate}
\item
The triple $(\shat (A), \dr, \pl)$ is a commutative \rt algebra;
\mlabel{it:crt}
\item
The quadruple~$(\shat (A),\dr,\pl, j_A)$ is the free commutative \rt algebra on $A$.
\mlabel{it:fcrt}
\end{enumerate}
\mlabel{thm:freeCTD}
\end{theorem}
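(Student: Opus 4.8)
The plan is to dispatch the algebra structure quickly and to concentrate on the universal property. For Item~(\mref{it:crt}), the commutativity and associativity of $\dr$ were already recorded after Eq.~(\mref{eq:dro}) as consequences of Proposition~\mref{prop:shrass} and the commutativity of $A$, and $1_A\in A^{\ot 1}$ is immediately checked to be the unit, so the only point left is that $\pl$ satisfies the $\lambda$-TD equation~(\mref{eq:LTD}). The key observation is that the defining recursion~(\mref{eq:dfndr}) for $\shar$ \emph{is} the $\lambda$-TD relation with a factor $1_A$ prepended. Indeed, for pure tensors $\fraka=a_0\ot\fraka'$ and $\frakb=b_0\ot\frakb'$ in $\shat(A)$, a direct application of Eq.~(\mref{eq:dro}) gives $\pl(\fraka)\dr\pl(\frakb)=1_A\ot(\fraka\shar\frakb)$, while the same formula together with $\pl(1_A)=1_A\ot 1_A$ expands the combination $\fraka\dr\pl(\frakb)+\pl(\fraka)\dr\frakb+\lambda\,\fraka\dr\frakb-\fraka\dr\pl(1_A)\dr\frakb$ into exactly the four summands $a_0\ot(\fraka'\shar\frakb)+b_0\ot(\fraka\shar\frakb')+\lambda a_0b_0\ot(\fraka'\shar\frakb')-a_0b_0\ot((\fraka'\shar1_A)\shar\frakb')$ on the right-hand side of Eq.~(\mref{eq:dfndr}), i.e.\ into $\fraka\shar\frakb$ itself. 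Applying $\pl$ then reproduces $1_A\ot(\fraka\shar\frakb)$, which is the required instance of~(\mref{eq:LTD}); no low-degree bookkeeping is needed here, since every element of $\shat(A)$ has tensor degree $\geq 1$ and so Eq.~(\mref{eq:dfndr}) applies to $\fraka\shar\frakb$ verbatim.

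For Item~(\mref{it:fcrt}) I would first note that every pure tensor factors as $a_1\ot\cdots\ot a_n=j_A(a_1)\dr\pl(a_2\ot\cdots\ot a_n)$, which forces any $\lambda$-TD algebra homomorphism $\free{f}$ with $f=\free{f}\circ j_A$ to satisfy the recursion $\free{f}(a_1\ot\cdots\ot a_n)=f(a_1)\,P(\free{f}(a_2\ot\cdots\ot a_n))$ with base case $\free{f}(a_1)=f(a_1)$. This both proves uniqueness and tells me how to \emph{define} $\free{f}$ (well defined by multilinearity and extended $\bfk$-linearly). With this definition $f=\free{f}\circ j_A$ is the $n=1$ case, unitality reads $\free{f}(1_A)=f(1_A)=1_R$, and $\free{f}\circ\pl=P\circ\free{f}$ is immediate from $\pl(\fraka)=1_A\ot\fraka$ and $f(1_A)=1_R$.

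The substantive step, and the main obstacle, is to show that $\free{f}$ is multiplicative, i.e.\ $\free{f}(\fraka\dr\frakb)=\free{f}(\fraka)\,\free{f}(\frakb)$; I would prove this by induction on the total tensor degree $m+n$ of $\fraka\in A^{\ot m}$ and $\frakb\in A^{\ot n}$. The base cases $m=1$ or $n=1$ follow from the first three branches of the recursion~(\mref{eq:dfndr2}) and the multiplicativity of $f$. For $m,n\geq 2$ I would apply the fourth branch of~(\mref{eq:dfndr2}), push $\free{f}$ through the sum by linearity, and invoke the induction hypothesis on each of the four products; writing $u=\free{f}(\fraka')$ and $v=\free{f}(\frakb')$, the bracketed argument becomes $uP(v)+P(u)v+\lambda uv-uP(1_R)v$, whereupon the $\lambda$-TD equation~(\mref{eq:LTD}) in $R$ collapses $f(a_1b_1)\,P(\,\cdot\,)$ to $f(a_1)f(b_1)P(u)P(v)=\free{f}(\fraka)\free{f}(\frakb)$, using commutativity of $R$. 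The one genuinely delicate point is the fourth term $(\fraka'\dr\pl(1_A))\dr\frakb'$: expanding $\fraka'\dr\pl(1_A)=\fraka'\dr(1_A\ot 1_A)$ through Lemma~\mref{lem:1a} yields pure tensors of degree at most $m$, so that multiplication by $\frakb'$ of degree $n-1$ keeps the total degree strictly below $m+n$ and the induction hypothesis does apply to every summand. Confirming this degree bound for each summand is the crux of the argument; once multiplicativity is secured, all homomorphism conditions hold and the universal property of Definition~\mref{defn:crbtd} is established.
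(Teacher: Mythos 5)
Your proposal is correct and follows essentially the same route as the paper: the $\lambda$-TD identity for $\pl$ is verified by the same direct expansion (you phrase it through Eq.~(\mref{eq:dro}) and the recursion~(\mref{eq:dfndr}), the paper through the equivalent recursion~(\mref{eq:dfndr2})), and the universal property is established exactly as in the paper, by forcing the recursion $\free{f}(a_1\ot\fraka')=f(a_1)P(\free{f}(\fraka'))$ from the factorization $\fraka=a_1\dr\pl(\fraka')$ and proving multiplicativity by induction on total tensor degree, collapsing the four terms via the $\lambda$-TD equation in $R$. Your explicit degree check on the term $(\fraka'\dr\pl(1_A))\dr\frakb'$ and the separate base cases $m=1$ or $n=1$ are slightly more careful than the paper's write-up but do not constitute a different argument.
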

\begin{proof}
(\mref{it:crt})Let $\fraka, \frakb\in\shat(A)$.  Then by Eq.~(\mref{eq:dfndr2}), we have
\begin{eqnarray*}
&&\lefteqn{\pl(\fraka)\dr \pl(\frakb)} \\
&=& (1_A\ot\fraka)\dr(1_A\ot\frakb)\\
&=&1_A\ot\Big(\fraka\dr\big(1_A\ot\frakb\big)\Big)+1_A\ot\Big(\big(1_A\ot\fraka\big)\dr\frakb\Big)+\lambda1_A\ot\big(\fraka\dr\frakb  \big)-1_A\ot\bigg(\Big(\fraka\dr \pl(1_A)\Big)\dr\frakb\bigg)\\
&=&\pl\Big(\fraka\dr \pl(\frakb)\Big)+\pl\Big(\pl(\fraka)\dr\frakb \Big)+\pl\big(\fraka\dr\frakb\big)-\pl\bigg(\Big(\fraka\dr \pl(1_A)\Big)\dr\frakb\bigg).
\end{eqnarray*}
Thus $\pl$ is a $\lambda$-TD operator on $\shat(A)$, and so $(\shat(A),\dr,\pl)$ forms a commutative $\lambda$-TD algebra.
\smallskip

\noindent
(\mref{it:fcrt})
We now show that $(\shat (A),\dr,\pl, j_A)$  is a free commutative $\lambda$-TD algebra, that is, $\shat(A)$ with $j_A$ satisfies the universal property in Definition~\mref{defn:crbtd}. Let $(R,P)$ be a commutative \rt algebra  and let $f:A\to R$ be an
 algebra homomorphism.
For any pure tensor $\fraka=a_{1}\otimes a_{2}\otimes\cdots\otimes a_{m}\in A^{\otimes m}$, we apply the induction on $m$ to define a $\lambda$-TD algebra homomorphism $\bar{f}:\shat(A)\to R $. If $m=1$, we define $\bar{f}(\fraka)=f(\fraka)$. Then $\bar{f}(1_A)=f(1_A)=1_R$, the unit of $R$. Assume that $\bar{f}(\fraka)$ has been defined for $m\leq k$ with $k\geq1$. Consider $\fraka=a_1\ot \fraka'\in A^{\otimes(k+1)}$ for $\fraka'\in A^{\otimes k}$. Note that \begin{equation}\fraka=a_{1}\dr(1_{A}\otimes\fraka')=a_{1}\dr \pl(\fraka').
\mlabel{eq:adr}
\end{equation}
Then define
\begin{equation}
\bar{f}(\fraka) =f(a_{1})P(\bar{f}(\fraka')),
\mlabel{eq:deff}
\end{equation}
where $\bar{f}(\fraka')$ is well-defined by the induction hypothesis. The uniqueness of $\bar{f}$ follows from the definition of $\bar{f}$.

Next we will verify that $\bar{f}$ is a \rt algebra homomorphism. By Eq.~(\mref{eq:deff}), we obtain   $$\bar{f}(\pl(\fraka))=\bar{f}(1_{A}\otimes \fraka)=f(1_A)P(\bar{f}(\fraka))=P(\bar{f}(\fraka)).$$
This gives
\begin{equation}\bar{f}\circ \pl=P\circ \bar{f}.
\mlabel{eq:fpr}
\end{equation}
 So it suffices to  verify that $\bar{f}$ satisfies
\begin{equation}
\bar{f}(\fraka\dr\frakb)=\bar{f}(\fraka)\bar{f}(\frakb),\quad\forall \fraka\in A^{\ot m},\,\frakb\in A^{\ot n}.
\mlabel{eq:comdr}
\end{equation}

We will carry out the verification by induction on $m+n\geq2.$  If $m+n=2$, then $m=n=1$, and so $\fraka, \frakb\in A$. By Eq.~(\mref{eq:deff}), we have
$$\bar{f}(\fraka\dr\frakb)=\bar{f}(\fraka\frakb)=f(\fraka\frakb)=f(\fraka)f(\frakb)=\bar{f}(\fraka)\bar{f}(\frakb).$$
Assume that Eq.~(\mref{eq:comdr}) holds for $m+n\leq k$. Let $\fraka=a_1\ot \fraka'\in A^{\ot m}$ and $\frakb=b_1\ot \frakb'\in A^{\ot n}$ with $m+n=k+1$. Then

\begin{eqnarray*}
\bar{f}(\fraka\dr\frakb)
  &=& \bar{f}\Big(\big(a_1\otimes\fraka'\big)\dr\big(b_1\otimes\frakb'\big)\Big) \\
  &=&\bar{f}\bigg(\Big(a_1\dr \pl\big(\fraka'\big)\Big)\dr\Big(b_1\dr \pl\big(\frakb'\big)\Big)\bigg)\quad(\text{by Eq.~(\mref{eq:adr})}) \\
  &=&\bar{f}\bigg(\big(a_1b_1\big)\dr\Big(\pl\big(\fraka'\big)\dr \pl\big(\frakb'\big)\Big)\bigg)\quad(\text{by the commutativity of $\dr$})\\
  &=&\bar{f}\Bigg(\big(a_1b_1\big)\dr \pl\bigg(\fraka'\dr \pl\big(\frakb'\big)+\pl\big(\fraka'\big)\dr\frakb'\\
&&+\lambda \fraka'\dr\frakb'-\big(\fraka'\dr \pl(1_A)\big)\dr\frakb'\bigg)\Bigg)\quad(\text{by $\pl$ being a $\lambda$-TD operator})\\
  &=&f\big(a_1b_1\big)P\bigg(\bar{f}\Big(\fraka'\dr \pl\big(\frakb'\big)+\pl\big(\fraka'\big)\dr \frakb' \\
  &\;&+\lambda \fraka'\dr\frakb'-(\fraka'\dr \pl(1_A))\dr\frakb'\Big)\bigg)\quad(\text{by Eq.~(\mref{eq:deff})})\\
  &=&f\big(a_1b_1\big)P\bigg(\bar{f}(\fraka')\bar{f}( \pl(\frakb'))+\bar{f}(\pl(\fraka'))\bar{f}( \frakb') \\
  &\;&+\lambda \bar{f}(\fraka')\bar{f}(\frakb')-\Big(\bar{f}(\fraka')\bar{f}(\pl(1_A))\Big)\bar{f}(\frakb')\bigg)\quad(\text{by the induction hypothesis}) \\
  &=&f(a_1)f(b_1)P\bigg(\bar{f}(\fraka')P(\bar{f}(\frakb'))+P(\bar{f}(\fraka'))\bar{f}( \frakb') \\
  &\;&+\lambda \bar{f}(\fraka')\bar{f}(\frakb')-\bar{f}(\fraka')P(\bar{f}(1_A))\bar{f}(\frakb')\bigg)\quad(\text{by Eq.~(\mref{eq:fpr})})\\
  &=&f\big(a_1\big)f\big(b_1\big)P\Big(\bar{f}\big(\fraka'\big)\Big)P\Big(\bar{f}\big(\frakb'\big)\Big)\quad(\text{ by $\bar{f}(1_A)=1_R$ and $P$ being a $\lambda$-TD operator})\\
  &=&\bigg(f\big(a_1\big)P\Big(\bar{f}\big(\fraka'\big)\Big)\bigg)\bigg(f\big(b_1\big)P\Big(\bar{f}\big(\frakb'\big)\Big)\bigg)\quad(\text{by the commutativity of $A$}) \\
  &=&\bar{f}(\fraka)\bar{f}(\frakb).\quad(\text{by Eq.~(\mref{eq:deff})})\\
\end{eqnarray*}
This completes the induction,  and  so the proof of Theorem~\mref{thm:freeCTD}.
\end{proof}

\section{The cocycle bialgebra structure on free commutative \rt algebras}
\mlabel{sec:bialg}

In this section, the free commutative \rt algebra $\shat(A)$ obtained in Theorem~\mref{thm:freeCTD} will be equipped with a  bialgebra structure, under the assumption that the generating algebra $A$ is a  bialgebra. So we let $A:=(A,m_A,\mu_A,\Delta_A,\vep_A)$ be a  bialgebra. To achieve our goal, the first step in this process is to construct a comultiplication on the free commutative \rt algebra $\shat (A):=(\shat (A), \dr, \pl)$ in terms of a suitable $1$-cocycle property $\Delta P=(\id\ot P)\Delta$, which was used to construct left counital Hopf algebras on free Nijenhuis algebras~\mcite{GLZ,ZG} and on bi-decorated planar rooted forests~\mcite{PZGL}.  Afterward,  a left counit on $\shat (A)$ is given.

\subsection{Comultiplication by cocycle condition}

Let us first recall the definition of a left counital cocycle bialgebra.
\begin{defn} \cite{GLZ,ZG,PZGL}
\begin{enumerate}
\item A {\bf left counital coalgebra} is a triple $(C,\Delta,\vep)$, where $C$ is a $\bfk$-module, the comultiplication $\Delta: C\to C\ot C$ is coassociative and the counit $\vep: C \to\bfk$ is left counital, that is, $(\vep\ot \id)\Delta =\beta_\ell$, where $\beta_\ell: C\to\bfk\ot C$, given by $c\mapsto 1\ot c$, is a bijection.
\item
A {\bf left counital operated bialgebra} is a sextuple $(H, m, \mu, \Delta, \vep,P)$,  where the quadruple $(H, m ,\mu, P)$ is an operated algebra and the triple $(H, \Delta, \vep)$ is a left counital coalgebra such that $\Delta:H\to H\ot H$ and $\vep:H\to \bfk$ are algebra homomorphisms;
\item
A left counintal operated bialgebra $(H,m,\mu,\Delta,\vep,P)$ that satisfies the 1-cocycle property $\Delta P=(\id\ot P)\Delta$ is called a {\bf left counital cocycle  bialgebra}.
\end{enumerate}
\mlabel{defn:leftbia}
\end{defn}
In order to distinguish  the multiplication in $\shat(A)$ and in $\shat(A)\ot \shat(A)$,  we  denote by $\bul$ the multiplication in $\shat(A)\ot \shat(A)$.

Let $A:=(A,m_A,\mu_A,\Delta_A,\vep_A)$ be a  bialgebra. Now we begin with the construction of the comultiplicaiton $\dt:\shat (A)\to \shat (A)\ot \shat (A)$. For this, it suffices to define $\dt(\fraka)$ for
$\fraka:=a_1\ot a_2\ot\cdots\ot a_n\in A^{\ot n}$ with $n\geq 1$, and then to extend by linearity. Use induction on $n$, starting with $n=1$, that is, $\fraka=a_1\in A$. Then  define $\dt(\fraka):=\da(a_1)$ to be the coproduct $\da$ on $A$, giving
\begin{equation}
\dt(1_A)=1_A\ot 1_A.
\mlabel{eq:onea}
\end{equation}
Assume that $\dt(\fraka)$ has been defined for $n$. Consider
 $\fraka=a_1\ot\fraka'\in A^{\ot (n+1)}$ with $\fraka':=a_2\ot \cdots\ot a_n \in A^{\ot n}.$
 By Eq.~(\mref{eq:adr}), we have
\begin{equation}
a_1\ot\fraka'=a_1\dr \pl(\fraka').
\mlabel{eq:drbase}
\end{equation}
By the 1-cocycle property, we first define
 \begin{equation}
 \dt(\pl(\fraka'))=(\id\ot \pl)\dt(\fraka').
 \mlabel{eq:dtpr}
 \end{equation}
Then  define
\begin{equation}
\dt(a_1\ot \fraka')=\dt(a_1)\bul\Big((\id\ot \pl)\dt(\fraka')\Big),
\mlabel{eq:dfndtre}
\end{equation}
 where $\dt(\fraka')$ in Eq.~(\mref{eq:dfndtre}) is well-defined by the induction hypothesis. So $\dt(\fraka)$ is well-defined.

Next, the  counit $\vt$ on $\shat (A)$ will be given in terms of the  counit $\vep_A$ of $A$.
Let $\fraka=a_1\ot a_2\ot \cdots\ot a_n\in A^{\ot n}$ with $n\geq 1$. Define
\begin{equation}
\vt:\shat (A)\to \bfk,\,\fraka\mapsto\vt(\fraka)=\left\{\begin{array}{lll}\vep_A(a_1),& \text{if } \,n=1;\\
0,& \text{if }\, n\geq2.
\end{array}
\right.
\mlabel{eq:dfnvt}
\end{equation}
Then extending by linearity, this map induces a linear map from $\shat(A)$ to $\bfk$. By $\vep_A$ being an algebra homomorphism,  we obtain $\vt(1_A)=\vep_A(1_A)=1_\bfk$.
\begin{lemma}Let $m,n\geq1$ and let $\fraka\in A^{\ot m}$ and $\frakb\in A^{\ot n}$ be pure tensors. Then
\begin{equation}
(\id\ot \dt)(\id\ot \pl)(\fraka\ot \frakb)=(\id\ot \id \ot \pl)(\id \ot \dt)(\fraka\ot \frakb).
\mlabel{eq:comupr1}
\end{equation}
and
\begin{equation}
(\dt\ot \id)(\id\ot \pl)(\fraka\ot \frakb)=(\id\ot\id \ot \pl)(\dt \ot \id)(\fraka\ot \frakb).
\mlabel{eq:comupr2}
\end{equation}
\end{lemma}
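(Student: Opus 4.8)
The plan is to observe that both identities are essentially formal, since in each case the two operators $\dt$ and $\pl$ act on disjoint tensor factors; the only genuine ingredient is the $1$-cocycle property already built into the definition of $\dt$. Throughout I write $\dt(\frakb)=\sum \frakb_{(1)}\ot \frakb_{(2)}$ in Sweedler's notation, and similarly for $\dt(\fraka)$; these are well-defined because $m,n\geq 1$, which is exactly the range on which $\dt$ was constructed.

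First I would establish Eq.~(\mref{eq:comupr1}). Unwinding the left-hand side gives
$$(\id\ot\dt)(\id\ot\pl)(\fraka\ot\frakb)=(\id\ot\dt)(\fraka\ot\pl(\frakb))=\fraka\ot\dt(\pl(\frakb)),$$
while the right-hand side becomes
$$(\id\ot\id\ot\pl)(\id\ot\dt)(\fraka\ot\frakb)=(\id\ot\id\ot\pl)\Big(\sum \fraka\ot\frakb_{(1)}\ot\frakb_{(2)}\Big)=\fraka\ot(\id\ot\pl)\dt(\frakb).$$
Hence Eq.~(\mref{eq:comupr1}) is equivalent to the assertion $\dt(\pl(\frakb))=(\id\ot\pl)\dt(\frakb)$, which is precisely the $1$-cocycle property Eq.~(\mref{eq:dtpr}) imposed in the definition of $\dt$. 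The factor $\fraka$ merely rides along in the first tensor slot and plays no active role, so the identity follows at once.

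Next I would treat Eq.~(\mref{eq:comupr2}), which is purely a matter of functoriality of the tensor product. The left-hand side expands as
$$(\dt\ot\id)(\id\ot\pl)(\fraka\ot\frakb)=(\dt\ot\id)(\fraka\ot\pl(\frakb))=\dt(\fraka)\ot\pl(\frakb),$$
and the right-hand side as
$$(\id\ot\id\ot\pl)(\dt\ot\id)(\fraka\ot\frakb)=(\id\ot\id\ot\pl)(\dt(\fraka)\ot\frakb)=\dt(\fraka)\ot\pl(\frakb),$$
where the last step uses that $\pl$ acts only on the rightmost tensor factor while $\dt(\fraka)$ occupies the first two. Since both sides coincide with $\dt(\fraka)\ot\pl(\frakb)$, the identity holds.

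I do not expect a real obstacle here: each of $\dt$ and $\pl$ operates on a tensor slot disjoint from the other maps, so the computations are entirely bookkeeping and the sole substantive input is the cocycle identity used in the first part. The one point requiring care is to match up the tensor slots correctly, since the active copy of $\pl$ sits in the second factor on the left of Eq.~(\mref{eq:comupr1}) but in the third factor on the right (and symmetrically for Eq.~(\mref{eq:comupr2})); once the indexing of the threefold tensor products is aligned, both equalities are immediate.
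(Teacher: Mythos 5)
Your proposal is correct and follows essentially the same route as the paper: Eq.~(\mref{eq:comupr1}) is reduced to the 1-cocycle property $\dt\pl=(\id\ot\pl)\dt$ of Eq.~(\mref{eq:dtpr}), and Eq.~(\mref{eq:comupr2}) is the direct slot-by-slot computation that the paper leaves as "straightforward." Your write-up merely makes the bookkeeping for the second identity explicit.
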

\begin{proof} By Eq.~(\mref{eq:dtpr}), we obtain
\begin{eqnarray*}
(\id\ot \dt)(\id\ot \pl)(\fraka\ot \frakb)&=&(\id\ot \dt \pl)(\fraka\ot\frakb)\\
&=&\fraka \ot (\dt \pl(\frakb))\\
&=&\fraka\ot \Big((\id \ot \pl)\dt (\frakb)\Big)\\
&=&(\id\ot \id\ot \pl)(\id \ot \dt)(\fraka\ot \frakb).\\
\end{eqnarray*}
Thus Eq.~(\mref{eq:comupr1}) holds, and Eq.~(\mref{eq:comupr2}) can be done by  straightforward  computation.
\end{proof}
\begin{lemma}
Let  $\shat(A)\ot \shat(A)$, $\bul$ and $\dt$ be as above. Then the triple $(\shat(A)\ot \shat(A), \bul, \id\ot \pl)$ forms a $\lambda$-TD algebra.
\mlabel{lem:doub}
\end{lemma}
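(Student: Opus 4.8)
The plan is to deduce the $\lambda$-TD identity for the operator $\id\ot\pl$ on $\shat(A)\ot\shat(A)$ directly from the fact, established in Theorem~\mref{thm:freeCTD}(\mref{it:crt}), that $\pl$ is a $\lambda$-TD operator on $(\shat(A),\dr)$. First I would record that $\bul$ is the componentwise product
\[
(\fraka_1\ot\frakb_1)\bul(\fraka_2\ot\frakb_2)=(\fraka_1\dr\fraka_2)\ot(\frakb_1\dr\frakb_2),
\]
so that $(\shat(A)\ot\shat(A),\bul)$ is a unitary commutative algebra with unit $1_A\ot 1_A$, inheriting associativity and commutativity from $\dr$ (Theorem~\mref{thm:freeCTD}(\mref{it:crt})). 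Since the defining $\lambda$-TD identity is bilinear in its two arguments and $\id\ot\pl$ is linear, it suffices to verify it on pure tensors $X=\fraka_1\ot\frakb_1$ and $Y=\fraka_2\ot\frakb_2$.

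Next I would compute the left-hand side. Because $\id\ot\pl$ acts only on the second tensor factor,
\[
(\id\ot\pl)(X)\bul(\id\ot\pl)(Y)=(\fraka_1\dr\fraka_2)\ot\big(\pl(\frakb_1)\dr\pl(\frakb_2)\big).
\]
Applying the $\lambda$-TD equation for $\pl$ to the second slot rewrites $\pl(\frakb_1)\dr\pl(\frakb_2)$ as
\[
\pl\Big(\frakb_1\dr\pl(\frakb_2)+\pl(\frakb_1)\dr\frakb_2+\lambda\,\frakb_1\dr\frakb_2-\frakb_1\dr\pl(1_A)\dr\frakb_2\Big).
\]

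For the right-hand side, I would observe that each of the four summands $X\bul(\id\ot\pl)(Y)$, $(\id\ot\pl)(X)\bul Y$, $\lambda\,X\bul Y$ and $X\bul(\id\ot\pl)(1_A\ot 1_A)\bul Y$ has first tensor factor $\fraka_1\dr\fraka_2$ — using $\fraka_1\dr 1_A=\fraka_1$ for the last one together with $(\id\ot\pl)(1_A\ot 1_A)=1_A\ot\pl(1_A)$ — and second factor equal, respectively, to $\frakb_1\dr\pl(\frakb_2)$, $\pl(\frakb_1)\dr\frakb_2$, $\lambda\,\frakb_1\dr\frakb_2$ and $(\frakb_1\dr\pl(1_A))\dr\frakb_2$. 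Since $\id\ot\pl$ sends $(\fraka_1\dr\fraka_2)\ot\frakc$ to $(\fraka_1\dr\fraka_2)\ot\pl(\frakc)$, applying $\id\ot\pl$ to the sum of these four terms reproduces exactly the left-hand side, the only point requiring care being associativity of $\dr$, so that $(\frakb_1\dr\pl(1_A))\dr\frakb_2=\frakb_1\dr\pl(1_A)\dr\frakb_2$.

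I expect no serious obstacle here: the entire verification factors through the second tensor component, where the $\lambda$-TD property of $\pl$ does all the work, while the first component merely transports $\dr$-products. The only bookkeeping to watch is that the unit term $X\bul(\id\ot\pl)(1_A\ot 1_A)\bul Y$ places $\pl(1_A)$ in the correct position of the second slot, which is immediate from $1_A\ot 1_A$ being the unit of $\bul$ and from the definition of $\id\ot\pl$.
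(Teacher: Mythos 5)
Your proposal is correct and follows essentially the same route as the paper: compute the left-hand side componentwise, apply the $\lambda$-TD identity for $\pl$ in the second tensor slot, and repackage the four resulting terms as $\bul$-products, using $(\id\ot \pl)(1_A\ot 1_A)=1_A\ot \pl(1_A)$ for the last term. Nothing is missing.
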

\begin{proof}
We only need to show that $\id\ot \pl$ satisfies Eq.~(\mref{eq:LTD}). For all $\fraka\ot \frakb, \frakc\ot \frakd\in \shat(A)\ot \shat(A)$, we have
\begin{eqnarray*}
&&\lefteqn{(\id\ot \pl)(\fraka\ot \frakb)\bul(\id\ot \pl)(\frakc\ot \frakd)}\\
&=&(\fraka\dr\frakc)\ot ((\pl(\frakb)\dr \pl(\frakd))\\
&=&(\fraka\dr \frakc)\ot \pl\Big(\frakb\dr \pl(\frakd)+\pl(\frakb)\dr \frakd+\lambda \frakb\dr\frakd-\frakb\dr \pl(1_A)\dr \frakd\Big)\\
&=&(\id\ot \pl)\Big((\fraka\dr\frakc)\ot \Big(\frakb\dr \pl(\frakd)+\pl(\frakb)\dr \frakd+\lambda \frakb\dr\frakd-(\frakb\dr \pl(1_A))\dr \frakd\Big)\Big)\\
&=&(\id\ot \pl)\Big((\fraka\ot \frakb)\bul(\id\ot \pl)(\frakc\ot\frakd)+(\id\ot \pl)(\fraka\ot \frakb)\bul (\frakc\ot \frakd)\\
&&+\lambda(\fraka\ot\frakb)\bul(\frakc\ot\frakd)-\Big(\fraka\ot (\frakb\dr \pl(1_A))\Big)\bul(\frakc\ot\frakd )\Big)\\
&=&(\id\ot \pl)\Big((\fraka\ot \frakb)\bul(\id\ot \pl)(\frakc\ot\frakd)+(\id\ot \pl)(\fraka\ot \frakb)\bul (\frakc\ot \frakd)\\
&&+\lambda(\fraka\ot\frakb)\bul(\frakc\ot\frakd)-\Big((\fraka\ot\frakb)\bul (1_A\ot \pl(1_A))\Big)\bul(\frakc\ot\frakd )\Big)\\
&=&(\id\ot \pl)\Big((\fraka\ot \frakb)\bul(\id\ot \pl)(\frakc\ot\frakd)+(\id\ot \pl)(\fraka\ot \frakb)\bul (\frakc\ot \frakd)\\
&&+\lambda(\fraka\ot\frakb)\bul(\frakc\ot\frakd)-(\fraka\ot\frakb)\bul (\id\ot \pl)(1_A\ot1_A)\bul(\frakc\ot\frakd )\Big).
\end{eqnarray*}
\end{proof}

\subsection{The compatibilities of $\dt$ and $\vt$}

We are now going to show that $\dt$  and $\vt$ as defined above are compatible with the multiplications.
\mlabel{sec:Compability}

\begin{prop}
The comultiplication $\dt:\shat (A)\to \shat (A)\ot \shat (A)$ is an algebra homomorphism.
\mlabel{prop:comuhom}
\end{prop}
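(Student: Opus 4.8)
The statement asks for $\dt$ to be an algebra homomorphism; since the unit condition $\dt(1_A)=1_A\ot 1_A$ is already recorded in Eq.~(\ref{eq:onea}), the plan is to prove multiplicativity, $\dt(\fraka\dr\frakb)=\dt(\fraka)\bul\dt(\frakb)$, for all pure tensors $\fraka\in A^{\ot m}$ and $\frakb\in A^{\ot n}$, by induction on $m+n\geq 2$. In the base case $m=n=1$ we have $\fraka=a_1,\frakb=b_1\in A$, so $\fraka\dr\frakb=a_1 b_1\in A$ and $\dt$ restricts to $\da$; the claim then becomes $\da(a_1 b_1)=\da(a_1)\bul\da(b_1)$, which is exactly multiplicativity of the coproduct of the bialgebra $A$ (here $\bul$ restricts to the componentwise product on $A\ot A$). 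The cases $m=1,n\geq 2$ and $m\geq 2,n=1$ of the inductive step are then immediate: for $m=1$, Eq.~(\ref{eq:dfndr2}) gives $\fraka\dr\frakb=a_1 b_1\ot\frakb'$, and expanding both $\dt(a_1 b_1\ot\frakb')$ and $\dt(b_1\ot\frakb')$ by the recursion Eq.~(\ref{eq:dfndtre}), then using associativity and commutativity of $\bul$ (inherited from $\shat(A)$ by Theorem~\ref{thm:freeCTD}) together with the base case, yields the identity; the case $n=1$ is symmetric.

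The essential case is $m,n\geq 2$. First I would transform the left-hand side. Writing $\fraka=a_1\dr\pl(\fraka')$ and $\frakb=b_1\dr\pl(\frakb')$ via Eq.~(\ref{eq:adr}) and using commutativity and associativity of $\dr$, one obtains $\fraka\dr\frakb=(a_1 b_1)\dr\big(\pl(\fraka')\dr\pl(\frakb')\big)$. Because $\pl$ is a $\lambda$-TD operator (Theorem~\ref{thm:freeCTD}), the $\lambda$-TD equation Eq.~(\ref{eq:LTD}) rewrites $\pl(\fraka')\dr\pl(\frakb')$ as $\pl(W)$, with
\[
W=\fraka'\dr\pl(\frakb')+\pl(\fraka')\dr\frakb'+\lambda\,\fraka'\dr\frakb'-\big(\fraka'\dr\pl(1_A)\big)\dr\frakb'.
\]
Applying $\dt$ and invoking Eq.~(\ref{eq:dfndtre}) gives $\dt(\fraka\dr\frakb)=\dt(a_1 b_1)\bul\big((\id\ot\pl)\dt(W)\big)$. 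I would then expand $\dt(W)$ term by term using the induction hypothesis on each $\dr$-product — all four summands having total tensor degree at most $m+n-1$ — together with the cocycle identity Eq.~(\ref{eq:dtpr}), which converts each occurrence of $\dt\circ\pl$ into $(\id\ot\pl)\circ\dt$.

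On the right-hand side I would expand $\dt(\fraka)=\dt(a_1)\bul(\id\ot\pl)\dt(\fraka')$ and $\dt(\frakb)=\dt(b_1)\bul(\id\ot\pl)\dt(\frakb')$ by Eq.~(\ref{eq:dfndtre}), and regroup (using commutativity and associativity of $\bul$) as $\big(\dt(a_1)\bul\dt(b_1)\big)\bul\big((\id\ot\pl)\dt(\fraka')\bul(\id\ot\pl)\dt(\frakb')\big)$. Now the key point: since $\id\ot\pl$ is itself a $\lambda$-TD operator on $\shat(A)\ot\shat(A)$ by Lemma~\ref{lem:doub}, the second factor expands by the very same four-term $\lambda$-TD pattern as $(\id\ot\pl)\dt(W)$, where the distinguished element $(\id\ot\pl)(1_A\ot 1_A)$ of Lemma~\ref{lem:doub} is identified with $(\id\ot\pl)\dt(1_A)$ using $\dt(1_A)=1_A\ot 1_A$ from Eq.~(\ref{eq:onea}). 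Combined with the base-case identity $\dt(a_1)\bul\dt(b_1)=\dt(a_1 b_1)$, the two sides agree, completing the induction.

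The principal obstacle is the bookkeeping in the main case. One must confirm that every summand of $W$, and in particular every sub-product generated when $\dt$ is applied to the triple product $\big(\fraka'\dr\pl(1_A)\big)\dr\frakb'$, has total tensor degree at most $m+n-1$, so that the induction hypothesis genuinely applies; these degree bounds are exactly what force the restriction $m,n\geq 2$ and the separate treatment of the low cases above. The conceptual reason the two expansions match term by term is that the cocycle condition Eq.~(\ref{eq:dtpr}) was chosen precisely so that $\dt$ intertwines the $\lambda$-TD operator $\pl$ on $\shat(A)$ with the $\lambda$-TD operator $\id\ot\pl$ on $\shat(A)\ot\shat(A)$; once both four-term $\lambda$-TD expansions are written out, this intertwining makes them coincide.
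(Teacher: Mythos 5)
Your proposal is correct and follows essentially the same route as the paper's proof: induction on $m+n$, rewriting $\fraka\dr\frakb=(a_1b_1)\dr\pl(W)$ via the $\lambda$-TD identity for $\pl$, pushing $\dt$ through with the cocycle condition and the induction hypothesis, and matching the result against the four-term expansion of $(\id\ot\pl)\dt(\fraka')\bul(\id\ot\pl)\dt(\frakb')$ supplied by Lemma~\ref{lem:doub}. The only cosmetic difference is that you spell out the cases $m=1$ or $n=1$ of the inductive step, which the paper dismisses as ``similar.''
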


\begin{proof}It suffices to verify that for pure tensors $\fraka\in A^{\ot m}$ and $\frakb\in A^{\ot n}$ with $m,n\geq1$,
\begin{equation}
\dt(\fraka\dr\frakb)=\dt(\fraka)\bul\dt(\frakb).
\mlabel{eq:alghom}
\end{equation}
We prove Eq.~(\mref{eq:alghom}) by induction on $m+n$. If $m+n=2$, then $m=n=1$, and so $\fraka,\frakb\in A$.  By the definitions of $\dr$ and  $\dt$, together with $\Delta_A$ being an algebra homomorphism, Eq.~\eqref{eq:alghom} holds.

Suppose that Eq.~(\mref{eq:alghom}) is true for $m+n\leq k$. Let $m+n=k+1\geq 3$. This leads to either $m\geq 2$ or $n\geq 2$. We just show that Eq.~(\mref{eq:alghom}) holds for the case $m\geq 2$ and $n\geq 2$. The others are similar.  When $m\geq 2$ and $n\geq 2$, denote $\fraka=a_1\ot \fraka'$ with $\fraka'\in A^{\ot (m-1)}$ and $\frakb=b_1\ot \frakb'$ with $\frakb'\in A^{\ot (n-1)}$.  On the one hand,
\begin{eqnarray*}
&&\lefteqn{\dt(\fraka\dr\frakb)}\\
 &=& \dt\bigg(\big(a_1\ot\fraka'\big)\dr\big(b_1\ot\frakb'\big)\bigg)\\
&=&\dt\bigg(\big(a_1\dr \pl(\fraka')\big)\dr \Big(b_1\dr \pl\big(\frakb'\big)\Big)\bigg)\quad(\text{by Eq.~(\mref{eq:drbase})})\\
&=&\dt\bigg(a_1b_1\dr\Big(\pl(\fraka')\dr  \pl(\frakb')\Big)\bigg)\quad(\text{by the definition of $\dr$})\\
&=&\dt\bigg(a_1b_1\dr \pl\Big(\fraka'\dr \pl\big(\frakb'\big)+\pl\big(\fraka'\big)\dr\frakb'+\lambda \fraka'\dr\frakb'-\Big(\fraka'\dr \pl(1_A)\Big) \dr\frakb'\Big)\bigg)\\
&&\quad\quad\quad\quad\quad\quad\quad\quad\quad\quad\quad\quad(\text{by $\pl$ being a $\lambda$-TD operator})\\
&=&\dt\bigg(a_1b_1\ot\Big(\fraka'\dr \pl\big(\frakb'\big)+\pl\big(\fraka'\big)\dr\frakb'+\lambda \fraka'\dr\frakb'-\Big(\fraka'\dr \pl(1_A)\Big) \dr\frakb'\Big)\bigg)
\quad(\text{by Eq.~(\mref{eq:dfndr2})})\\
&=&\dt(a_1b_1)\bul \Bigg((\id\ot \pl)\dt\Big(\fraka'\dr \pl(\frakb')  \Big)
+(\id\ot \pl)\dt\Big(\pl\big(\fraka'\big)\dr\frakb'\Big)\\
&&+\lambda(\id\ot \pl)\dt\big( \fraka'\dr\frakb' \big)
-(\id\ot \pl)\dt\Big((\fraka'\dr \pl(1_A)) \dr\frakb'\Big)\Bigg)\quad(\text{by Eq.~(\mref{eq:dfndtre})})\\
&=&\dt(a_1b_1)\bul\bigg((\id\ot \pl)\Big(\dt\big(\fraka'\big)\bul\dt\big(\pl(\frakb')\big)\Big)
+(\id\ot \pl)\Big(\dt\big(\pl(\fraka')\big)\bul\dt\big(\frakb'\big)\Big)\\
&&+\lambda(\id\ot \pl)\Big(\dt\big( \fraka'\big)\bul\dt\big(\frakb'\big)\Big)
-(\id\ot \pl)\Big(\Big(\dt(\fraka')\bul\dt( \pl(1_A))\Big)\bul\dt(\frakb')\Big)\bigg)\\
&&\quad\quad\quad\quad\quad\quad\quad\quad\quad\quad\quad\quad(\text{by the induction hypothesis})\\
&=&\dt(a_1b_1)\bul\bigg((\id\ot \pl)\Big( \dt(\fraka')\bul (\id \ot \pl)\dt(\frakb')
+(\id\ot \pl)\dt(\fraka')\bul\dt(\frakb')\\
&&+\lambda\dt( \fraka')\bul\dt\big(\frakb'\big)
-\dt(\fraka')\bul(\id\ot \pl)(1_A\ot1_A)\bul\dt(\frakb')\Big)\bigg).
\quad(\text{by  Eqs.~(\mref{eq:onea}) and~(\mref{eq:dtpr})})
\end{eqnarray*}

On the other hand,
\begin{eqnarray*}
&&\lefteqn{\dt(\fraka)\bul\dt(\frakb)}\\
&=&\Big(\dt(a_1)\bul \Big( (\id\ot \pl)\dt(\fraka')\Big)\Big)\bul\Big(\dt(b_1)\bul \Big( (\id\ot \pl)\dt(\frakb')\Big)\Big) \quad(\text{by Eq.~(\mref{eq:dfndtre})})\\
&=&\Big(\dt(a_1)\bul \dt(b_1)\Big)\bul \bigg( (\id\ot \pl)\dt(\fraka')\bul\Big((\id\ot \pl)\dt(\frakb')\Big)\bigg) \quad(\text{by the commutativity of $\bul$})\\
&=&\dt(a_1b_1)\bul \bigg((\id\ot \pl)\Big(\dt(\fraka')\bul(\id\ot \pl)\dt(\frakb')+(\id\ot \pl)\dt(\fraka')\bul \dt(\frakb')\\
&&+\lambda \dt(\fraka')\bul\dt(\frakb')-\dt(\fraka')\bul (\id\ot \pl)(1_A\ot 1_A)\bul \dt(\frakb')\Big)\bigg).\quad(\text{by Lemma~\mref{lem:doub}})
\end{eqnarray*}
Thus the terms of $\dt(\fraka\dr\frakb)$ agree with  the terms of $\dt(\fraka)\bul\dt(\frakb)$, and so Eq.~(\mref{eq:alghom}) holds.  This completes the induction.
\end{proof}
From Proposition~\mref{prop:comuhom}, we obtain
\begin{coro}Let $\shat(A)\ot\shat(A)$ and $\dt$ be as above. Then the induced maps
$$\id\ot\dt:\shat(A)\ot\shat(A)\to \shat(A)\ot\Big(\shat(A)\ot \shat(A)\Big), \fraka\ot\frakb\mapsto \fraka\ot \dt(\frakb)$$
and
$$\dt\ot\id:\shat(A)\ot\shat(A)\to \Big(\shat(A)\ot \shat(A)\Big)\ot\shat(A), \fraka\ot\frakb\mapsto \dt(\fraka)\ot \frakb$$
are algebra homomorphisms, respectively.
\mlabel{coro:indt}
\end{coro}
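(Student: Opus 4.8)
The plan is to obtain this as a formal consequence of Proposition~\mref{prop:comuhom}, using the general principle that the tensor product of two algebra homomorphisms between commutative algebras is again an algebra homomorphism. Indeed, $\dt$ has just been shown to be an algebra homomorphism, and the identity map on $\shat(A)$ is trivially one, so the two induced maps are precisely $\id\ot\dt$ and $\dt\ot\id$; the claim should then follow without any new ideas.

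Concretely, I would first record the multiplication on the triple tensor products: on $\shat(A)\ot\big(\shat(A)\ot\shat(A)\big)$ it is the componentwise product $(\fraka\ot X)\cdot(\frakc\ot Y)=(\fraka\dr\frakc)\ot(X\bul Y)$ for $\fraka,\frakc\in\shat(A)$ and $X,Y\in\shat(A)\ot\shat(A)$, and analogously on $\big(\shat(A)\ot\shat(A)\big)\ot\shat(A)$. Then for pure tensors $\fraka\ot\frakb$ and $\frakc\ot\frakd$ in $\shat(A)\ot\shat(A)$, the definition of $\bul$ gives $(\fraka\ot\frakb)\bul(\frakc\ot\frakd)=(\fraka\dr\frakc)\ot(\frakb\dr\frakd)$, whence
\[
(\id\ot\dt)\big((\fraka\ot\frakb)\bul(\frakc\ot\frakd)\big)=(\fraka\dr\frakc)\ot\dt(\frakb\dr\frakd).
\]
Rewriting $\dt(\frakb\dr\frakd)=\dt(\frakb)\bul\dt(\frakd)$ by Proposition~\mref{prop:comuhom} and comparing with the product $\big(\fraka\ot\dt(\frakb)\big)\cdot\big(\frakc\ot\dt(\frakd)\big)$ shows that the two sides coincide, so $\id\ot\dt$ is multiplicative; it preserves the unit since $\dt(1_A)=1_A\ot1_A$ by Eq.~(\mref{eq:onea}). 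The argument for $\dt\ot\id$ is entirely symmetric, with $\dt$ now acting on the first factor and $\id$ on the second.

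I do not anticipate a genuine obstacle here: once the componentwise multiplications on the triple tensor products are made explicit, the verification is a single application of Proposition~\mref{prop:comuhom} followed by extension by bilinearity. The only point deserving mild care is the bookkeeping of which tensor factor each of $\id$ and $\dt$ acts on, together with the consistent use of the associativity isomorphism identifying the two bracketings of the triple tensor product.
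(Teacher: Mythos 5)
Your proposal is correct and follows exactly the route the paper intends: the paper states this corollary as an immediate consequence of Proposition~\mref{prop:comuhom} without writing out a proof, and your explicit verification (componentwise multiplication on the triple tensor products, multiplicativity of $\dt$ from Proposition~\mref{prop:comuhom}, unit preservation from $\dt(1_A)=1_A\ot 1_A$) is precisely the omitted routine argument.
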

We next verify that  $\vt:\shat (A)\to \bfk$ given by Eq.~(\mref{eq:dfnvt}) is an algebra homomorphism.
\begin{prop}
The linear map $\vt$ is an algebra homomorphism.
\mlabel{prop:counithom}
\end{prop}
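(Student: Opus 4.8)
The plan is to exploit the fact that the left counit $\vt$ of Eq.~(\mref{eq:dfnvt}) is nothing but the composite of the projection $\pi_1\colon \shat(A)\to A^{\ot 1}=A$ (killing all tensors of length $\geq 2$) with the counit $\vep_A$ of the bialgebra $A$. Since $\vt(1_A)=\vep_A(1_A)=1_\bfk$ has already been recorded, it remains only to check multiplicativity, and by bilinearity it suffices to prove $\vt(\fraka\dr\frakb)=\vt(\fraka)\vt(\frakb)$ for pure tensors $\fraka\in A^{\ot m}$ and $\frakb\in A^{\ot n}$ with $m,n\geq 1$. Writing $\fraka=a_0\ot\fraka'$ and $\frakb=b_0\ot\frakb'$ with $\fraka'\in A^{\ot(m-1)}$ and $\frakb'\in A^{\ot(n-1)}$ inside $\shrp(A)$, I would start from the closed formula $\fraka\dr\frakb=a_0b_0\ot(\fraka'\shar\frakb')$ of Eq.~(\mref{eq:dro}); the whole problem then reduces to locating the $A^{\ot 1}$-component of this element, since $\vt$ detects only length-one tensors.

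First I would dispose of the case $m=n=1$: here $\fraka=a_1,\frakb=b_1\in A$ and $\fraka\dr\frakb=a_1b_1$, so $\vt(\fraka\dr\frakb)=\vep_A(a_1b_1)=\vep_A(a_1)\vep_A(b_1)=\vt(\fraka)\vt(\frakb)$, using that $\vep_A$ is an algebra homomorphism. In every remaining case, namely $m\geq 2$ or $n\geq 2$, the right-hand side $\vt(\fraka)\vt(\frakb)$ vanishes because at least one factor is a tensor of length $\geq 2$; so the task is to show that the left-hand side also vanishes, i.e.\ that $\fraka\dr\frakb=a_0b_0\ot(\fraka'\shar\frakb')$ carries no length-one component.

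The crux is therefore the following grading observation about the $\lambda$-TD shuffle product: for $\fraka'\in A^{\ot p}$ and $\frakb'\in A^{\ot q}$ in $\shrp(A)$, the scalar ($A^{\ot 0}$) component of $\fraka'\shar\frakb'$ is zero unless $p=q=0$. When $p=0$ or $q=0$ this is immediate, since $\shar$ acts there by scalar multiplication and preserves length; when $p,q\geq 1$ every one of the four summands on the right of Eq.~(\mref{eq:dfndr}) is a pure tensor whose first slot lies in $A$, so $\fraka'\shar\frakb'\in\bigoplus_{i\geq 1}A^{\ot i}$ has no scalar part. Granting this, the $A^{\ot 1}$-component of $a_0b_0\ot(\fraka'\shar\frakb')$ equals $a_0b_0\ot(\text{scalar part of }\fraka'\shar\frakb')$, which is nonzero only when $p=q=0$, that is, only when $m=n=1$. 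Hence for $m\geq 2$ or $n\geq 2$ we get $\vt(\fraka\dr\frakb)=0=\vt(\fraka)\vt(\frakb)$, completing the argument.

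I expect the only genuinely delicate point to be the grading observation, and in particular the bookkeeping that confirms no degree-lowering can produce a stray scalar term from the correction summand $-a_1b_1\ot((\fraka'\shar 1_A)\shar\frakb')$ in Eq.~(\mref{eq:dfndr}); this is controlled once one notes that this summand, like the other three, already begins with the factor $a_1b_1\in A$ and so lives in length $\geq 1$. Everything else is a routine separation into the two cases above.
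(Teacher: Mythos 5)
Your proposal is correct and follows essentially the same route as the paper: dispose of the case $m=n=1$ via $\vep_A$ being an algebra homomorphism, and for $m\geq 2$ or $n\geq 2$ use the closed formula $\fraka\dr\frakb=a_0b_0\ot(\fraka'\shar\frakb')$ of Eq.~(\mref{eq:dro}) to see that the product lands in $\bigoplus_{i\geq 2}A^{\ot i}$ and is therefore killed by $\vt$. The only difference is that you spell out the grading observation (that $\fraka'\shar\frakb'$ has no scalar component unless both factors are scalars) which the paper asserts without proof; this is a welcome but not essential elaboration.
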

\begin{proof}
By the definition of $\vt$, $\vt(1_A)=1_k$. So we just prove that
 \begin{equation}
 \vt(\fraka \dr\frakb)=\vt(\fraka)\vt(\frakb)
 \mlabel{eq:comvt}
 \end{equation}
 for any pure tensors $\fraka:=a_1\ot \fraka'\in A^{\ot m}$ and
 $\frakb:=b_1\ot\frakb'\in A^{\ot n}$  with $m,n\geq 1$.
 If $m=n=1$, then by Eq.~(\mref{eq:dfndr2}), $\fraka\dr\frakb=a_1b_1$, and so Eq.~(\mref{eq:comvt}) follow from Eq.~(\mref{eq:dfnvt}).
If $m\geq2$ or $n\geq2$, then  $\fraka\dr\frakb=a_1b_1\ot(\fraka'\shar\frakb')$ by Eq.~(\mref{eq:dro}, and so $\fraka\dr\frakb\in \sum_{i\geq 2}^{m+n-1}A^{\ot i}$. Then by Eq.~(\mref{eq:dfnvt}) again,
$$\vt(\fraka\dr\frakb)=0=\vt(\fraka)\vt(\frakb).$$
\end{proof}

\subsection{The coassociativity of $\dt$ and the left  counitality of $\vt$}
\mlabel{sec:Coasso}
In the following, we will show that $\dt$ satisfies the coassociativity and $\vt$ satisfies the left counitality.

\begin{prop}
The comultiplication $\dt$ is coassociative, that is,
\begin{equation}
(\id\ot \dt)\dt=(\dt\ot\id )\dt.
\mlabel{eq:comult}
\end{equation}
\mlabel{prop:comult}
\end{prop}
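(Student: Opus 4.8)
The plan is to prove Eq.~(\mref{eq:comult}) by induction on the tensor degree $n$ of a pure tensor $\fraka=a_1\ot\cdots\ot a_n\in A^{\ot n}$, exploiting the recursive definition of $\dt$ in Eq.~(\mref{eq:dfndtre}) together with the fact that $\dt$ is an algebra homomorphism (Proposition~\mref{prop:comuhom}). The decisive structural input is Corollary~\mref{coro:indt}: both $\id\ot\dt$ and $\dt\ot\id$ are algebra homomorphisms, so each of them distributes over the product $\bul$ appearing in $\dt(\fraka)=\dt(a_1)\bul\big((\id\ot\pl)\dt(\fraka')\big)$. This reduces the coassociativity check to comparing the two tensor factors of $\dt(\fraka)$ separately, where the first factor is governed by the coassociativity of the bialgebra $A$ and the second by the commutation identities of Eqs.~(\mref{eq:comupr1}) and~(\mref{eq:comupr2}) together with the induction hypothesis.

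For the base case $n=1$ we have $\fraka=a_1\in A$ and $\dt(a_1)=\da(a_1)$, so Eq.~(\mref{eq:comult}) reduces to $(\id\ot\da)\da(a_1)=(\da\ot\id)\da(a_1)$, which is exactly the coassociativity of $A$. For the inductive step, assume Eq.~(\mref{eq:comult}) holds in degree $\le n$ and write $\fraka=a_1\ot\fraka'$ with $\fraka'\in A^{\ot n}$. Applying $\id\ot\dt$ to Eq.~(\mref{eq:dfndtre}) and using Corollary~\mref{coro:indt} gives
$$(\id\ot\dt)\dt(\fraka)=\big((\id\ot\dt)\dt(a_1)\big)\bul\big((\id\ot\dt)(\id\ot\pl)\dt(\fraka')\big).$$
The first factor equals $(\id\ot\da)\da(a_1)$, since $\dt$ restricts to $\da$ on $A$. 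For the second factor I would use Eq.~(\mref{eq:comupr1}) to commute $\dt$ past $\pl$, rewriting it as $(\id\ot\id\ot\pl)(\id\ot\dt)\dt(\fraka')$, and then invoke the induction hypothesis to replace $(\id\ot\dt)\dt(\fraka')$ by $(\dt\ot\id)\dt(\fraka')$.

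Symmetrically, applying $\dt\ot\id$ to Eq.~(\mref{eq:dfndtre}) and using Corollary~\mref{coro:indt} and Eq.~(\mref{eq:comupr2}) yields
$$(\dt\ot\id)\dt(\fraka)=\big((\da\ot\id)\da(a_1)\big)\bul\big((\id\ot\id\ot\pl)(\dt\ot\id)\dt(\fraka')\big).$$
Comparing the two expressions, the second factors are now literally identical, while the first factors $(\id\ot\da)\da(a_1)$ and $(\da\ot\id)\da(a_1)$ agree by the coassociativity of $A$; hence the two sides coincide and the induction closes. I expect the main obstacle to be bookkeeping rather than conceptual: one must track carefully which copy of $\shat(A)$ each tensor factor occupies so that the homomorphism property of Corollary~\mref{coro:indt} and the commutation identities of Eqs.~(\mref{eq:comupr1}) and~(\mref{eq:comupr2}) are applied on the correct slots, and one must confirm that the two ambient multiplications $\bul$ on the triple product, grouped as $\shat(A)\ot(\shat(A)\ot\shat(A))$ versus $(\shat(A)\ot\shat(A))\ot\shat(A)$, are matched consistently when the final factors are equated.
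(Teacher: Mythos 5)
Your proposal follows essentially the same route as the paper's proof: induction on the tensor degree, distributing $\id\ot\dt$ and $\dt\ot\id$ over the product $\bul$ in Eq.~(\mref{eq:dfndtre}) via Corollary~\mref{coro:indt}, commuting past $\pl$ via Eqs.~(\mref{eq:comupr1}) and~(\mref{eq:comupr2}), and closing with the induction hypothesis and the coassociativity of $\da$. The argument is correct as stated.
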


\begin{proof}
Let $\fraka:=a_1\ot \fraka'\in A^{\ot k}$ with $k\geq 1$.
Then we shall verify that
\begin{equation}
(\id\ot \dt)\dt(\fraka)=(\dt\ot\id )\dt(\fraka).
\mlabel{eq:cop}
\end{equation}
We now proceed by induction on $n$. For $k=1$,  we have $\fraka=a_1\in A$.  Then by the definition of $\dt$ and the coassociativity of $\da$, Eq.~\eqref{eq:cop} holds.

Assume that $k\geq 1$ and Eq.~(\mref{eq:cop}) is true for all  $\fraka\in A^{\ot k}$. Consider $\fraka=a_1\ot \fraka'\in A^{\ot (k+1)}$. Expanding the left hand side $(\id\ot \dt)\dt(\fraka)$ of  Eq.~(\mref{eq:cop}) gives
\begin{eqnarray*}
&&\lefteqn{(\id\ot \dt)\dt(\fraka)}\\
 &=&(\id\ot \dt)\Big(\dt(a_1)\bul\Big((\id\ot \pl)\dt(\fraka')\Big)\Big)\quad(\text{by Eq.~(\mref{eq:dfndtre})})\\
  &=&(\id\ot \dt)\dt(a_1)\bul(\id\ot \dt)(\id\ot \pl)\dt(\fraka')\quad(\text{by Corollary~\mref{coro:indt}})\\
  &=&(\id\ot \dt)\dt(a_1)\bul(\id\ot\id\ot \pl)(\id\ot\dt)\dt(\fraka')\quad(\text{by Eq.~(\mref{eq:comupr1})})\\
 &=&(\id\ot \dt)\dt(a_1)\bul(\id\ot\id\ot \pl)(\dt\ot\id)\dt(\fraka').\quad(\text{by the induction hypothesis})
\end{eqnarray*}
On the other hand, we obtain
\begin{eqnarray*}
&&\lefteqn{(\dt\ot \id)\dt(\fraka)}\\
 &=&(\dt\ot \id)\Big(\dt(a_1)\bul\Big((\id\ot \pl)\dt(\fraka')\Big)\Big)\quad(\text{by Eq.~(\mref{eq:dfndtre})})\\
  &=&(\dt\ot \id)\dt(a_1)\bul(\dt\ot \id)(\id\ot \pl)\dt(\fraka')\quad(\text{by Corollary~\mref{coro:indt}})\\
  &=&(\dt\ot \id)\dt(a_1)\bul(\id\ot\id\ot \pl)(\dt\ot\id)\dt(\fraka')\quad(\text{by Eq.~(\mref{eq:comupr2})})\\
   &=&(\id\ot \dt)\dt(a_1)\bul(\id\ot\id\ot \pl)(\dt\ot\id)\dt(\fraka').\quad(\text{by  the coassociativity of $\da$})
\end{eqnarray*}
Then the expansion of $(\id \ot \dt)\dt(\fraka)$ matches up with the expansion of $(\dt\ot \id)\dt(\fraka)$. This completes the induction, and thus proving Eq.~(\mref{eq:cop}).
\end{proof}

\begin{prop}
The linear map $\vt$  satisfies the  left counitality, that is,
\begin{equation}
(\vt\ot \id)\dt=\beta_\ell,
\end{equation}
where $\beta_\ell:\shat (A)\to \bfk\ot \shat (A)$ is given by $\fraka\mapsto 1\ot \fraka$\, for  $\fraka\in A^{\ot k}$ and for $k\geq 1$.
\mlabel{prop:counit}
\end{prop}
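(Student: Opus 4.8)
The plan is to establish $(\vt\ot\id)\dt(\fraka)=1\ot\fraka$ for every pure tensor $\fraka\in A^{\ot k}$ by induction on the tensor degree $k\geq 1$, and then to conclude the identity $(\vt\ot\id)\dt=\beta_\ell$ by linearity. For the base case $k=1$ I would take $\fraka=a_1\in A$, so that $\dt(a_1)=\da(a_1)$ by definition. Writing $\da(a_1)=\sum a_{1(1)}\ot a_{1(2)}$ and using that the restriction of $\vt$ to $A=A^{\ot 1}$ equals $\vep_A$ by Eq.~(\mref{eq:dfnvt}), I obtain $(\vt\ot\id)\dt(a_1)=\sum\vep_A(a_{1(1)})\ot a_{1(2)}$. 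The counit axiom of the bialgebra $A$ then forces $\sum\vep_A(a_{1(1)})\ot a_{1(2)}=1\ot a_1$, which settles $k=1$.

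For the inductive step I would write $\fraka=a_1\ot\fraka'$ with $\fraka'\in A^{\ot k}$ and start from the defining relation~(\mref{eq:dfndtre}),
\[
\dt(\fraka)=\dt(a_1)\bul\Big((\id\ot\pl)\dt(\fraka')\Big).
\]
Since $\vt$ is an algebra homomorphism by Proposition~\mref{prop:counithom}, so is $\vt\ot\id:\shat(A)\ot\shat(A)\to\bfk\ot\shat(A)$, and applying it converts $\bul$ into the componentwise product of $\bfk\ot\shat(A)$. The first factor $(\vt\ot\id)\dt(a_1)$ equals $1\ot a_1$ exactly as in the base case. For the second factor I would use the functorial identity $(\vt\ot\id)(\id\ot\pl)=(\id_\bfk\ot\pl)(\vt\ot\id)$ together with the induction hypothesis $(\vt\ot\id)\dt(\fraka')=1\ot\fraka'$ to get $(\id_\bfk\ot\pl)(1\ot\fraka')=1\ot\pl(\fraka')$. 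Multiplying the two factors in $\bfk\ot\shat(A)$ and invoking $a_1\dr\pl(\fraka')=a_1\ot\fraka'=\fraka$ from Eq.~(\mref{eq:drbase}) yields $1\ot\fraka$, completing the induction.

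The only genuinely delicate point I anticipate is the bookkeeping around the shift operator in the second factor: one must check that $\vt\ot\id$ commutes with $\id\ot\pl$, so that the induction hypothesis may be applied \emph{before} the shift rather than after it, and one must keep the identification $\bfk\ot\shat(A)\cong\shat(A)$ consistent when collapsing the componentwise product back to the single tensor $a_1\dr\pl(\fraka')$. Everything else reduces to the base-case computation and the left counitality already carried by $A$.
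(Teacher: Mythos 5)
Your proposal is correct and follows essentially the same route as the paper: induction on the tensor degree, the base case reduced to the counit axiom of $A$, and the inductive step using the multiplicativity of $\vt\ot\id$ (via Proposition~\mref{prop:counithom}), the cocycle relation $(\vt\ot\id)(\id\ot\pl)=(\id\ot\pl)(\vt\ot\id)$, and the identification $1\ot\pl(\fraka')$ with $\beta_\ell(\pl(\fraka'))$ before recombining via $a_1\dr\pl(\fraka')=\fraka$. The ``delicate point'' you flag is precisely the step the paper handles in the same way, so no gap remains.
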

\begin{proof}
It suffices to verify that
\begin{equation}
(\vt\ot \id)\dt(\fraka)=\beta_\ell(\fraka).
\mlabel{eq:vtl}
\end{equation}
for every pure tensor $\fraka\in A^{\ot k}$. We do this by applying the induction on $k\geq 1$. If $k=1$, then $\fraka\in A$, and so Eq.~\mref{eq:vtl} follows from the  left counitality of $\vep_A$.

Assume $k>1$ and consider $\fraka:=a_1\ot \fraka'\in A^{\ot (k+1)}$. Then
\begin{eqnarray*}
(\vt\ot \id )\dt(\fraka)&=&(\vt\ot \id)\dt\Big(a_1\dr \pl( \fraka')\Big)\\
  &=&(\vt\ot \id)\Big(\dt (a_1)\bul \dt(\pl( \fraka'))\Big)\quad(\text{by Eq.~(\mref{eq:dfndtre}}))\\
  &=&\Big((\vt\ot \id)\dt(a_1)\Big)\bigg((\vt\ot \id)\dt\Big(\pl(\fraka')\Big)\bigg)\quad(\text{by Proposition~\mref{prop:counithom}})\\
  &=&\beta_\ell(a_1)(\vt\ot\id)(\id\ot \pl)\dt(\fraka')\quad(\text{by Eq.~(\mref{eq:dtpr})}) \\
  &=&\beta_\ell(a_1)(\id\ot \pl)(\vt\ot\id)\dt(\fraka')\\
  &=&\beta_\ell(a_1)(\id\ot \pl)\beta_\ell(\fraka')\quad(\text{by the induction hypothesis})\\
  &=&\beta_\ell(a_1)\beta_\ell(\pl(\fraka'))\\
  &=&\beta_\ell(a_1\dr \pl(\fraka'))\quad(\text{by $\beta_\ell$ being an algebra isomorphism})\\
&=&\beta_\ell(\fraka).
\end{eqnarray*}
This completes the induction and the proof of Eq.~(\mref{eq:vtl}).
\end{proof}
However, $\vt$ does not satisfy the right counitality. For example, we first define an algebra isomorphism $\beta_r:\shat (A)\to \shat (A)\ot\bfk$, given by $\fraka\mapsto \fraka\ot 1$,\, for all $\fraka\in A^{\ot k}$. Let $\fraka=\pl(x)$, where $x\in A$. By using Sweedler's notation: $\da(x)=\sum x'\ot x''$,  we get
\begin{eqnarray*}
(\id\ot\vt )\dt(\pl(x))&=&(\id\ot\vt )(\id\ot \pl)\da(x)\quad(\text{by Eq.~(\mref{eq:dtpr}}))\\
  &=&(\id\ot\vt )(\id\ot \pl)\Big(\sum x'\ot x''\Big)\\
  &=&\sum x'\ot \vt\Big(\pl( x'')\Big)\\
  &=&0 \quad(\text{by Eq.~(\mref{eq:dfnvt})})\\
  &\neq&\beta_r(\pl(x)).
\end{eqnarray*}
Lastly, we state the main theorem of this section.
It follows that there exists a linear map
$\ml:\bfk\to \shat (A)$, given by
$$c\mapsto c1_A, \quad c\in \bfk.$$
Then we can verify that $\ml$ is a unit for  $(\shat (A),\dr)$.
According to our previous results, we obtain
\begin{theorem}
The sextuple $\shat (A):=(\shat (A),\dr,\ml,\dt,\vt, \pl)$ is a left counital cocycle bialgebra.
\mlabel{thm:lcbia}
\end{theorem}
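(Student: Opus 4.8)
The plan is to recognize that Theorem~\mref{thm:lcbia} is an assembly statement: every substantive computation has already been carried out in the lemmas and propositions of Sections~2 and~3, so the proof reduces to checking each clause of Definition~\mref{defn:leftbia} against an already-established result. First I would unwind the definition into its constituent requirements. To be a left counital cocycle bialgebra, the sextuple $(\shat(A),\dr,\ml,\dt,\vt,\pl)$ must satisfy: (i) $(\shat(A),\dr,\ml,\pl)$ is a (unital) operated algebra; (ii) $(\shat(A),\dt,\vt)$ is a left counital coalgebra; (iii) $\dt$ and $\vt$ are algebra homomorphisms; and (iv) the $1$-cocycle identity $\dt\pl=(\id\ot\pl)\dt$ holds.

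For clause (i), Theorem~\mref{thm:freeCTD}(\mref{it:crt}) already shows that $(\shat(A),\dr,\pl)$ is a commutative $\lambda$-TD algebra, hence in particular an associative algebra equipped with a single linear operator; combined with the observation recorded just before the theorem that $\ml:\bfk\to\shat(A)$, $c\mapsto c1_A$, is a unit for $(\shat(A),\dr)$, this yields the unital operated algebra structure. For clause (ii), I would invoke Proposition~\mref{prop:comult} for the coassociativity of $\dt$ and Proposition~\mref{prop:counit} for the left counitality $(\vt\ot\id)\dt=\beta_\ell$. Since $\beta_\ell:\shat(A)\to\bfk\ot\shat(A)$, $\fraka\mapsto 1\ot\fraka$, is manifestly a bijection, these two facts are exactly what Definition~\mref{defn:leftbia} demands for $(\shat(A),\dt,\vt)$ to be a left counital coalgebra.

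Clause (iii) is supplied directly by Proposition~\mref{prop:comuhom} (that $\dt$ is an algebra homomorphism) and Proposition~\mref{prop:counithom} (that $\vt$ is an algebra homomorphism); together with (i)--(ii) this certifies that the sextuple is a left counital operated bialgebra. Finally, for clause (iv) I would emphasize that the $1$-cocycle property is not proved after the fact but is encoded in the very construction of the comultiplication: Equation~(\mref{eq:dtpr}) sets $\dt(\pl(\fraka'))=(\id\ot\pl)\dt(\fraka')$ for every $\fraka'$, and because this relation holds throughout $\shat(A)$ it is precisely the identity of linear maps $\dt\pl=(\id\ot\pl)\dt$.

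There is no genuine computational obstacle remaining; the only points meriting a line of care are verifying that $\beta_\ell$ is a bijection (so that the abstract left counitality axiom of Definition~\mref{defn:leftbia} really does reduce to Proposition~\mref{prop:counit}) and reading the recursively defined relation~(\mref{eq:dtpr}) correctly as a global equality of maps rather than a mere pointwise prescription on the image of $\pl$. Assembling items (i)--(iv) in this order then completes the proof.
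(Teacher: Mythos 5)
Your proposal is correct and follows essentially the same route as the paper: the paper's proof likewise assembles Theorem~\ref{thm:freeCTD} for the operated algebra structure, Propositions~\ref{prop:comult} and~\ref{prop:counit} for the left counital coalgebra structure, and Propositions~\ref{prop:comuhom} and~\ref{prop:counithom} for the compatibility, with the $1$-cocycle identity built into the definition of $\dt$ via Eq.~(\ref{eq:dtpr}). Your extra remark that Eq.~(\ref{eq:dtpr}) must be read as a global equality of maps is a fair point of care but does not change the argument.
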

\begin{proof}
By Theorem~\mref{thm:freeCTD}, the quadruple $(\shat (A),\dr, \ml, \pl)$ is a commutative $\lambda$-TD algebra. Furthermore, the triple $(\shat (A),\dt,\vt)$ is a left counital  coalgebra by  Proposition~\mref{prop:comult} and Proposition~\mref{prop:counit}. Finally,  by Proposition~\mref{prop:comuhom} and Proposition~\mref{prop:counithom}, the sextuple $(\shat (A),\dr,\ml,\dt,\vt,\pl)$ is a left counital cocycle bialgebra.
\end{proof}

\section{The left counital Hopf algebra structure on free commutative $\lambda$-TD algebras}
\mlabel{sec:hopf}

This section will equip the  free commutative $\lambda$-TD algebra $(\shat (A),\dr,\ml,\dt,\vt)$ with a left counital Hopf algebra structure.
\begin{defn} {\rm\cite{Gub,Man}}
\begin{enumerate}
\item A left counital operated bialgebra $H:=(H,m,\mu,\Delta,\vep,P)$ is called {\bf filtered} if there exists an increasing filtration $H^{n}$ for $n\geq 0$ such that
\begin{equation}
 \bigcup\limits_{n\geq0}H^n=H;\quad
H^{p}H^{q}\subseteq H^{p+q};\quad
\Delta(H^{n})\subseteq H^0\ot H^n+\sum_{\substack{p+q=n\\p>0,q>0}} H^p\ot H^q.
\mlabel{eq:defil}
\end{equation}
\item A filtered left counital operated bialgebra $H$ is {\bf connected} if $H^{0}=\im \mu(=\bfk 1_H)$.
\end{enumerate}
\end{defn}
\begin{lemma}Let $\bfk$ be a field. Let $H$ be a connected filtered left counital operated bialgebra and let $e=\mu\vep$.
Then
$$H=\im u\oplus \ker \vep.$$
\mlabel{lem:kerv}
\end{lemma}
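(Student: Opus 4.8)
The plan is to realize the asserted decomposition as the canonical splitting of $H$ induced by the $\bfk$-linear endomorphism $e=\mu\vep$, by showing that $e$ is idempotent with $\im e=\im\mu$ and $\ker e=\ker\vep$. Since any idempotent $\bfk$-linear map on $H$ yields a direct-sum decomposition of $H$ into its image and its kernel, once these two identifications are in place the statement follows immediately.

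First I would record the single fact that does all the work, namely $\vep\mu=\id_\bfk$. Because $H$ is a left counital operated bialgebra, the counit $\vep$ is an algebra homomorphism, so $\vep(1_H)=1_\bfk$; hence $\vep(\mu(c))=\vep(c1_H)=c\vep(1_H)=c$ for all $c\in\bfk$. Consequently
$$e^2=\mu\vep\mu\vep=\mu(\vep\mu)\vep=\mu\,\id_\bfk\,\vep=\mu\vep=e,$$
so $e$ is idempotent.

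Next I would identify the image and kernel of $e$. The inclusion $\im e\subseteq\im\mu$ is clear from $e=\mu\vep$, while $e(c1_H)=\mu(\vep(c1_H))=\mu(c)=c1_H$ shows $\im\mu\subseteq\im e$; thus $\im e=\im\mu$, which equals $\bfk 1_H$ by the connectedness hypothesis $H^0=\im\mu$. For the kernel, $x\in\ker e$ means $\vep(x)1_H=0$, and applying $\vep$ gives $\vep(x)\vep(1_H)=\vep(x)=0$, so $\ker e\subseteq\ker\vep$; the reverse inclusion is immediate, since $\vep(x)=0$ forces $e(x)=\mu(\vep(x))=0$. Hence $\ker e=\ker\vep$. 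Writing $x=e(x)+(x-e(x))$ and noting $e(x-e(x))=e(x)-e^2(x)=0$ then gives $H=\im e+\ker e$, and if $y\in\im e\cap\ker e$, say $y=e(z)$, then $y=e(z)=e^2(z)=e(y)=0$; therefore $H=\im\mu\oplus\ker\vep$.

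I do not anticipate a genuine obstacle here: this is the standard augmentation-ideal splitting, and the only real input is $\vep(1_H)=1_\bfk$ together with the connectedness identification $\im\mu=\bfk 1_H$. The point worth flagging is that the filtration, the $1$-cocycle property, and the left counitality $(\vep\ot\id)\dt=\beta_\ell$ play no role in this lemma itself; they are assembled in the hypotheses only because the idempotent $e=\mu\vep$ and the resulting decomposition $H=\im\mu\oplus\ker\vep$ furnish the scaffolding on which the later recursive construction of a (right) antipode will be built.
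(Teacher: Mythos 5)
Your proposal is correct and follows essentially the same route as the paper: establish $\vep\mu=\id_\bfk$ from $\vep$ being an algebra homomorphism, deduce that $e=\mu\vep$ is idempotent, and identify $\im e=\im\mu$ and $\ker e=\ker\vep$ to obtain the splitting. Your remark that connectedness and the filtration are not really needed here (only $\vep(1_H)=1_\bfk$) is accurate and a nice observation, but the argument itself matches the paper's.
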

\begin{proof}
By  $\vep: H\to \bfk$ being an algebra homomorphism, we obtain $\vep \mu=\id_\bfk$. Then $e^2=\mu(\vep\mu)\vep=e$, and so
$$H=\im e\oplus \ker e.$$
By $e=\mu\vep$, we get $\im e\subseteq \im \mu$.  If $x\in\im \mu$, then $\mu (c)=x$ for some $c\in\bfk$, and so $x=c\mu(1_\bfk)=c\mu(\vep(1_H))=e(c1_H)\in \im e$. Thus $\im e=\im\mu$.  By $e=\mu\vep$ again, $\ker \vep \subseteq \ker e$. Let $z\in \ker e$. Then 
$$e(z)=\mu(\vep(z))=\vep(z)\mu(1_k)=\vep(z)1_H=0.$$
This gives $\vep(z)=0$, and then $\ker e\subseteq \ker \vep$, yielding $\ker e=\ker \vep$. Thus
$$H=\im u\oplus \ker \vep.$$
\end{proof}
By Lemma~\mref{lem:kerv} and the connectedness of $H$, we obtain
\begin{equation}
H=\bfk 1_H\oplus \ker\vep.
\mlabel{eq:conpl}
\end{equation}
\begin{lemma}Let $\bfk$ be a field. Let $H$ be a connected filtered left counital operated bialgebra.
\begin{enumerate}
\item Let $\hat{H}^n:=H^n\cap\ker \vep$ for  $n>0$. Then
\begin{equation}
\hat{H}^n\subseteq \hat{H}^{n+1}
\mlabel{eq:hats}
\end{equation}
and
\begin{equation}
H^n=H^0\oplus  \hat{H}^n.
\mlabel{eq:hath}
\end{equation}
\mlabel{it:hat}
\item Let $p,q>0$. Then
\begin{equation}
H^p\ot H^q\subseteq H^0\ot H^q+\hat{H}^p\ot H^0+ \hat{H}^p\ot\hat{H}^q.
\mlabel{eq:hpq}
\end{equation}
\mlabel{it:hpq}
\item For $x\in \hat{H}^n$ with $n>0$, we have
$$\Delta(x)=1\ot x + \tilde{\Delta}(x),\quad
\text{where}\,\, \tilde{\Delta}(x)\in (\ker\vep\ot H^0+ \ker\vep\ot \ker \vep).$$
\mlabel{it:delx}
\end{enumerate}
\mlabel{lem:delx}
\end{lemma}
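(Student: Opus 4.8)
The plan is to establish the three items in order, letting each feed the next, with the genuine content concentrated in Item~(c).

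For Item~(a), the inclusion $\hat{H}^n\subseteq\hat{H}^{n+1}$ is immediate: the filtration is increasing, so $H^n\subseteq H^{n+1}$, and intersecting with $\ker\vep$ preserves the inclusion. For the decomposition $H^n=H^0\oplus\hat{H}^n$, I would first note that $H^0\subseteq H^n$ by monotonicity and that $H^0\cap\ker\vep=\bfk 1_H\cap\ker\vep=0$, since $\vep(1_H)=1_\bfk\neq 0$; hence $H^0\cap\hat{H}^n=0$ and the sum is direct. To see that it exhausts $H^n$, I would use the canonical splitting $x=\vep(x)1_H+\big(x-\vep(x)1_H\big)$ for $x\in H^n$: the first summand lies in $H^0$, while the second lies in $\ker\vep$ (apply $\vep$) and in $H^n$ (as $\vep(x)1_H\in H^0\subseteq H^n$), hence in $\hat{H}^n$. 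This is precisely the restriction to $H^n$ of the global decomposition $H=\bfk 1_H\oplus\ker\vep$ recorded in~\eqref{eq:conpl}.

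Item~(b) is then purely formal. Substituting $H^p=H^0\oplus\hat{H}^p$ and $H^q=H^0\oplus\hat{H}^q$ from~\eqref{eq:hath} and expanding the tensor product produces the four summands $H^0\ot H^0$, $H^0\ot\hat{H}^q$, $\hat{H}^p\ot H^0$, and $\hat{H}^p\ot\hat{H}^q$; recombining the first two via $H^0\ot H^0+H^0\ot\hat{H}^q=H^0\ot(H^0\oplus\hat{H}^q)=H^0\ot H^q$ yields exactly the right-hand side of~\eqref{eq:hpq}, in fact with equality.

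The substantive step is Item~(c), where the point is that only a \emph{left} counit is available, so one can split off a $1\ot x$ term but not a symmetric $x\ot 1$ term. Writing $\Delta(x)=\sum_i u_i\ot v_i$, I would decompose each $u_i$ along $H=\bfk 1_H\oplus\ker\vep$ as $u_i=\vep(u_i)1_H+\big(u_i-\vep(u_i)1_H\big)$, obtaining $\Delta(x)=1_H\ot\big(\sum_i\vep(u_i)v_i\big)+\sum_i\big(u_i-\vep(u_i)1_H\big)\ot v_i$. The left counitality $(\vep\ot\id)\Delta=\beta_\ell$ identifies $\sum_i\vep(u_i)v_i$ with $x$ under $\bfk\ot H\cong H$, so the first summand is precisely $1\ot x$; setting $\tilde{\Delta}(x):=\sum_i\big(u_i-\vep(u_i)1_H\big)\ot v_i$ places every left tensor factor in $\ker\vep$, i.e.\ $\tilde{\Delta}(x)\in\ker\vep\ot H$. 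Since $H=H^0\oplus\ker\vep$, this space coincides with $\ker\vep\ot H^0+\ker\vep\ot\ker\vep$, which is the assertion. The chief obstacle is conceptual rather than computational: one must invoke left counitality in exactly the correct slot and resist extracting an $x\ot 1$ contribution, which need not exist in the one-sided setting. The filtration enters through Items~(a) and~(b), but plays no role in this final identity, which rests only on connectedness and left counitality.
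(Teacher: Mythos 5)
Your proposal is correct, and Items~(a) and~(b) coincide with the paper's own argument (same canonical splitting $x=\vep(x)1_H+(x-\vep(x)1_H)$, same four-term expansion of the tensor product; you additionally observe that (\mref{eq:hpq}) is in fact an equality, which the paper does not claim but which is true). Item~(c), however, is where you genuinely diverge. You prove it directly from connectedness and left counitality alone: decompose the left tensor legs of $\Delta(x)$ along $H=\bfk 1_H\oplus\ker\vep$, identify the $1_H\ot(\,\cdot\,)$ part with $1\ot x$ via $(\vep\ot\id)\Delta=\beta_\ell$, and note $\ker\vep\ot H=\ker\vep\ot H^0+\ker\vep\ot\ker\vep$. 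This is shorter and shows the filtration is irrelevant to the literal statement of Item~(c). The paper instead runs $\Delta(H^n)$ through the filtration axiom (\mref{eq:defil}) and Items~(a),(b) to land in $H^0\ot H^n+\ker\vep\ot H^0+\ker\vep\ot\ker\vep$, and only then uses left counitality to pin the $H^0\ot H^n$ part down to $1\ot x$. The payoff of the paper's longer route is the refinement recorded immediately after the lemma, Eq.~(\mref{eq:deltah}): $\tilde{\Delta}(x)\in\sum_{p+q=n,\,p>0,\,q>0}H^p\ot H^q$, i.e.\ both legs of $\tilde{\Delta}(x)$ sit in strictly lower filtration degree. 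That containment is what makes the recursive definition of the right antipode in Proposition~\mref{prop:lchopf} well-founded, and your argument does not yield it; if your proof were adopted, the filtration computation would still have to be supplied separately to justify Eq.~(\mref{eq:deltah}). So the proposal is a valid proof of the lemma as stated, but not a drop-in replacement for the paper's proof in the context of the paper.
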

\begin{proof}
(\mref{it:hat})
For all $x\in H^n$, we get $x=\vep (x)1_H+x-\vep(x) 1_H$. Since $\vep(x-\vep (x)1_H)=\vep(x)-\vep(x)=0$ and $x-\vep(x)1_H\in H^n+H^0\subseteq H^n$, we have $H^n=H^0+\hat{H}^n$. For every $y\in H^0\cap \hat{H}^n$, we have $y=c 1_H$ for some $c\in\bfk$ by the connectedness of $H$ and $\vep(y)=0$. This leads to $0=\vep(y)=\vep(c 1_H)=c$. Thus $y=0$, proving Eq.~(\mref{eq:hath}).
\smallskip

\noindent
(\mref{it:hpq})Firstly, Eq.~(\mref{eq:hats}) follows from the increasing filtration $H^n\subseteq H^{n+1}$. Secondly, by Eq.~(\mref{eq:hath}), we obtain
\begin{eqnarray*}
H^p\ot H^q&=&(H^0\oplus \hat{H}^p)\ot (H^0\oplus \hat{H}^q)\\
&\subseteq& H^0\ot H^0+H^0\ot \hat{H}^q+\hat{H}^p\ot H^0+\hat{H}^p\ot\hat{H}^q\\
&\subseteq&H^0\ot H^q+\hat{H}^p\ot H^0+\hat{H}^p\ot \hat{H}^q\quad\text{(by $\hat{H}^q\subseteq H^q$ and $H^0\subseteq H^q$})
\end{eqnarray*}
\smallskip

\noindent
(\mref{it:delx})
Let $n>0$. By Eq.~(\mref{eq:defil}), we obtain
\begin{eqnarray*}
\Delta(H^n)&\subseteq& H^0\ot H^n+\sum_{\substack{p+q=n\\p>0,q>0}} H^p\ot H^q\\
&\subseteq& H^0\ot H^n+\sum_{\substack{p+q=n\\p>0,q>0}} H^0\ot H^q+\hat{H}^p\ot H^0+\hat{H}^p\ot \hat{H}^q\quad(\text{by Eq.~(\mref{eq:hpq})})\\
&\subseteq& H^0\ot H^n+\sum_{\substack{p+q=n\\p>0,q>0}} H^0\ot H^q+\sum_{\substack{p+q=n\\p>0,q>0}}\hat{H}^p\ot H^0+\sum_{\substack{p+q=n\\p>0,q>0}}\hat{H}^p\ot \hat{H}^q\\
&\subseteq& H^0\ot H^n+ H^0\ot H^{n-1}+\hat{H}^{n-1}\ot H^0+\sum_{\substack{p+q=n\\p>0,q>0}}\hat{H}^p\ot \hat{H}^q\quad(\text{by Eq.~(\mref{eq:hats})})\\
&\subseteq& H^0\ot H^n+ \hat{H}^{n-1}\ot H^0+\sum_{\substack{p+q=n\\p>0,q>0}}\hat{H}^p\ot \hat{H}^q\\
&\subseteq& H^0\ot H^n+ \ker\vep\ot H^0+\ker\vep\ot\ker\vep.
\end{eqnarray*}
Then for all $x\in \hat{H}^n$ for $n>0$, we can write
$$\Delta(x)=1\ot u+\tilde{\Delta}(x),$$
where $u\in H^n$ and $\tilde{\Delta}(x)\in (\ker\vep\ot H^0+\ker\vep\ot\ker\vep)$. Then by the left counitality  of $\vep$ given by Definition~\mref{defn:leftbia},
\begin{eqnarray*}
x&=&\beta^{-1}(\vep\ot \id)\Delta(x)\\
&=&\beta^{-1}(\vep\ot \id)(1\ot u +\tilde{\Delta}(x))\\
&=&\beta^{-1}(\vep(1)\ot u+(\vep\ot \id)\tilde{\Delta}(x))\\
&=&u.\quad\text{(by $\tilde{\Delta}(x)\in (\ker\vep\ot H^0+\ker\vep\ot\ker\vep)$)}
\end{eqnarray*}
This yields
$$\Delta(x)=1\ot x+\tilde{\Delta}(x),$$
where $\tilde{\Delta}(x)\in (\ker\vep\ot H^0+\ker\vep\ot\ker\vep)$.
\end{proof}
From the above proof of Item~(\mref{it:delx}), we also obtain
\begin{equation}
\tilde{\Delta}(x)\in \sum_{\substack{p+q=n\\p>0,q>0}}H^p\ot H^q.
\mlabel{eq:deltah}
\end{equation}
\begin{defn}Let $H:=(H,m,\mu,\Delta,\vep,P)$ be a  left counital operated bialgebra.
\begin{enumerate}
\item A linear map $S:H\to H$ is said to be an {\bf right antipode} if $S$ is a right inverse of $\id_H$ under the convolution product $\ast$, that is
$$\id_H\ast S=e.$$
\item
A left counital operated bialgebra $H$  with a right antipode is  called a {\bf left counital  Hopf algebra}.
\end{enumerate}
\mlabel{defn:lcHopf}
\end{defn}

The following fact is parallel to  \cite[Corollary II. 3.2]{Man}.
\begin{prop}
  A connected filtered left counital operated bialgebra is a left counital Hopf algebra. The right antipode is recursively defined by
  \begin{equation}
  S(1_H)=1_H,\quad S(x)=-\sum_{x}x'S(x''), \,x\in\ker\vep,
\mlabel{eq:shx}
  \end{equation}
using  Sweedler's notation $\tilde{\Delta}(x)=\sum_{x}x'\ot x''$
  \mlabel{prop:lchopf}
\end{prop}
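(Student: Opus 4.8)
The plan is to build the right antipode $S$ by induction on the filtration degree and then to check directly that it is a right inverse of $\id_H$ for the convolution product $\ast$. Throughout, write $e=\mu\vep$ as in Lemma~\mref{lem:kerv}, and recall from Eq.~(\mref{eq:conpl}) that connectedness yields the decomposition $H=\bfk 1_H\oplus \ker\vep$. It therefore suffices to define $S$ on $\bfk 1_H$ and on $\ker\vep$ and then extend by linearity: set $S(1_H)=1_H$ and define $S$ on $\ker\vep$ by the recursion in Eq.~(\mref{eq:shx}).

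First I would confirm that this recursion is well defined, which I expect to be the crux of the argument. For $x\in\hat{H}^n=H^n\cap\ker\vep$ with $n>0$, Lemma~\mref{lem:delx}~(\mref{it:delx}) gives $\Delta(x)=1\ot x+\tilde\Delta(x)$ with $\tilde\Delta(x)\in \ker\vep\ot H^0+\ker\vep\ot\ker\vep$, while Eq.~(\mref{eq:deltah}) refines this to $\tilde\Delta(x)\in\sum_{p+q=n,\,p,q>0}H^p\ot H^q$. Choosing a representation $\tilde\Delta(x)=\sum_x x'\ot x''$ compatible with this bigrading, each second factor $x''$ lies in some $H^q$ with $q=n-p<n$; splitting $x''$ along $H=\bfk 1_H\oplus\ker\vep$, its $1_H$-part is handled by $S(1_H)=1_H$ and its $\ker\vep$-part lies in $\hat{H}^q$ with $q<n$, where $S$ is available by the induction hypothesis. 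Hence $-\sum_x x'S(x'')$ is meaningful, so $S$ is defined on all of $\hat{H}^n$, and thus on $\ker\vep=\bigcup_{n>0}\hat{H}^n$ by Eq.~(\mref{eq:hats}). The whole termination of the recursion rests on the filtration-lowering property of $\tilde\Delta$ in Eq.~(\mref{eq:deltah}).

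With $S$ constructed, I would verify $\id_H\ast S=e$ on each summand of $H$. On $\bfk 1_H$, using $\Delta(1_H)=1_H\ot 1_H$ and $S(1_H)=1_H$, we get $(\id_H\ast S)(1_H)=m(\id_H\ot S)\Delta(1_H)=1_H=e(1_H)$. For $x\in\ker\vep$, applying $m(\id_H\ot S)$ to $\Delta(x)=1\ot x+\sum_x x'\ot x''$ yields
\begin{equation*}
(\id_H\ast S)(x)=S(x)+\sum_x x'S(x''),
\end{equation*}
which vanishes by the defining recursion $S(x)=-\sum_x x'S(x'')$; since also $e(x)=\vep(x)1_H=0$, we conclude $(\id_H\ast S)(x)=e(x)$. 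By linearity $\id_H\ast S=e$, so $S$ is a right antipode and $H$ is a left counital Hopf algebra in the sense of Definition~\mref{defn:lcHopf}.

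Finally I would note why only a \emph{right} antipode arises. The last computation uses exactly the one-sided decomposition $\Delta(x)=1\ot x+\tilde\Delta(x)$, which comes from the \emph{left} counitality of $\vep$ recorded in Lemma~\mref{lem:delx}~(\mref{it:delx}). The mirror identity $S\ast\id_H=e$ would instead require $\Delta(x)=x\ot 1+\cdots$, i.e.\ right counitality, which does not hold here; this is precisely the obstruction preventing $S$ from being a two-sided antipode, consistent with the left counital (rather than genuine Hopf) framework.
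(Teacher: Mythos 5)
Your proposal is correct and follows essentially the same route as the paper: both define $S$ by the recursion of Eq.~(\mref{eq:shx}) and verify $\id_H\ast S=e$ separately on $\bfk 1_H$ and on $\ker\vep$ using the decomposition $\Delta(x)=1\ot x+\tilde{\Delta}(x)$ from Lemma~\mref{lem:delx} and Eq.~(\mref{eq:deltah}). The only difference is that you spell out the well-definedness of the recursion via induction on the filtration degree (a point the paper's proof leaves implicit), which is a worthwhile addition but not a different method.
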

\begin{proof}
Verify directly that the linear map $S$ defined in Eq.~(\mref{eq:shx}) satisfies the equation $\id \ast S=e$.
By $\Delta$ being an algebra homomorphism, we get $\Delta(1_H)=1_H\ot 1_H$. The formula $e=\mu \vep$ gives $$e(1_H)=\mu(\vep(1_H))=\mu(1_\bfk)=1_H.$$
Then
$$(\id \ast S)(1_H)=m(\id\ot S)\Delta(1_H)=S(1_H)\Rightarrow (\id \ast S)(1_H)=1_H=e(1_H).$$
Let $x\in \ker\vep$. Then by Lemma~\mref{lem:delx} Item~(\mref{it:delx})),
$$(\id \ast S)(x)=m(\id\ot S)\Delta(x)=m(\id\ot S)(1\ot x+\tilde{\Delta}(x))=S(x)+\sum_{x}x'S(x''),$$
where $\tilde{\Delta}(x)=\sum_{x}x'\ot x''\in  \sum_{\substack{p+q=n\\p>0,q>0}}H^p\ot H^q$ follows immediately from Eq.~(\mref{eq:deltah}).
By Eq.~(\mref{eq:shx}), we obtain
 $$S(x)+\sum_{x} x'S(x'')=0.$$
This gives
$$(\id \ast S)(x)=0=e(x), \,x\in\ker\vep.$$
\end{proof}
\begin{theorem}
Let $A=\bigcup\limits_{n\geq0}A^n$ is a connected filtered left counital bialgebra. Let $\shat(A)=(\shat (A),\dr,\ml,\dt,\vt, \pl)$ be as in Theorem~\mref{thm:lcbia}.  Then $\shat(A)$ is a left counital Hopf algebra.
\mlabel{thm:Main}
\end{theorem}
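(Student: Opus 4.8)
The plan is to deduce the statement from Proposition~\mref{prop:lchopf}. By Theorem~\mref{thm:lcbia} the sextuple $\shat(A)=(\shat(A),\dr,\ml,\dt,\vt,\pl)$ is already a left counital cocycle (hence operated) bialgebra, so it suffices to produce on $\shat(A)$ an increasing filtration satisfying~\mref{eq:defil} together with the connectedness $\shat(A)^0=\im\ml=\bfk 1_A$. Once $\shat(A)$ is seen to be a connected filtered left counital operated bialgebra, Proposition~\mref{prop:lchopf} supplies the right antipode recursively via~\mref{eq:shx} and the theorem follows.

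First I would transport the connected filtration $A=\bigcup_{n\ge0}A^n$ to $\shat(A)=\bigoplus_{m\ge1}A^{\ot m}$ by assigning to a pure tensor $\fraka=a_1\ot\cdots\ot a_m$ with $a_i\in A^{k_i}$ the weight $w(\fraka)=(m-1)+\sum_{i=1}^m k_i$, and setting
\[
\shat(A)^n:=\sum_{(m-1)+k_1+\cdots+k_m\le n}A^{k_1}\ot\cdots\ot A^{k_m}.
\]
That $(\shat(A)^n)_{n\ge0}$ is increasing with union $\shat(A)$ is immediate. Moreover $w(\fraka)\le 0$ forces $m=1$ and $a_1\in A^0=\bfk 1_A$, so $\shat(A)^0=\bfk 1_A=\im\ml$, giving connectedness. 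The key numerical fact to record here is that $\pl$ raises the weight by exactly $1$, since $\pl(\fraka)=1_A\ot\fraka$ lengthens $\fraka$ by a single factor $1_A\in A^0$; in particular any element of the form $\pl(\cdot)$ has tensor length $\ge2$ and hence weight $\ge1$.

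Next I would check the three conditions in~\mref{eq:defil}. Multiplicativity $\shat(A)^p\dr\shat(A)^q\subseteq\shat(A)^{p+q}$ follows by a straightforward induction from the recursion~\mref{eq:dfndr2}: each step either preserves the tensor length and keeps the summed $A$-degrees additive, or merges two factors (so that $A^pA^q\subseteq A^{p+q}$ controls the $A$-degree while the length drops), and the identity $w(\pl(\cdot))=w(\cdot)+1$ absorbs the insertions of $1_A$; in every case the weight stays $\le w(\fraka)+w(\frakb)$. The compatibility of $\dt$ is the heart of the matter, and I would prove $\dt(\shat(A)^n)\subseteq\shat(A)^0\ot\shat(A)^n+\sum_{p+q=n,\,p,q>0}\shat(A)^p\ot\shat(A)^q$ by induction on the tensor length $m$. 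For $m=1$ one has $\dt(a_1)=\da(a_1)$, and because $A$ is a \emph{connected filtered left counital} bialgebra, the computation in the proof of Lemma~\mref{lem:delx}(\mref{it:delx}) applies to $A$ and yields $\da(A^n)\subseteq A^0\ot A^n+\sum_{p+q=n,\,p,q>0}A^p\ot A^q$, which is exactly the required shape (left counitality is precisely what kills the would-be $A^n\ot A^0$ term). For the step $\fraka=a_1\ot\fraka'$ I would expand $\dt(\fraka)=\dt(a_1)\bul\big((\id\ot\pl)\dt(\fraka')\big)$ using~\mref{eq:dfndtre} and the cocycle relation~\mref{eq:dtpr}, apply the induction hypothesis to $\dt(\fraka')$, and combine the weight-additivity of $\bul$ (a consequence of the multiplicativity just established) with $w(\pl(\cdot))=w(\cdot)+1$ to bound the total weight of every resulting term by $w(\fraka)=n$.

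The step I expect to be the main obstacle is this last compatibility of $\dt$, and specifically the need to exclude terms lying in $\shat(A)^p\ot\shat(A)^0$ with $p>0$. Here the $1$-cocycle identity~\mref{eq:dtpr} is decisive: in $(\id\ot\pl)\dt(\fraka')$ every second tensor slot is of the form $\pl(\cdot)$, hence of positive weight, and multiplying on the left by $\dt(a_1)=\da(a_1)\in A\ot A$ acts componentwise through $\bul$ without decreasing that weight to $0$. Thus each term of $\dt(\fraka)$ either has its first slot in $\shat(A)^0$ (contributing to $\shat(A)^0\ot\shat(A)^n$) or has both slots of positive weight, which is the desired form. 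Together with the verifications above this shows $\shat(A)$ is a connected filtered left counital operated bialgebra, and Proposition~\mref{prop:lchopf} then endows it with a left counital Hopf algebra structure, completing the proof.
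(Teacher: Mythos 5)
Your proposal is correct and follows essentially the same route as the paper: the same weight $w(\fraka)=(m-1)+\sum_i\deg(a_i)$ defining the filtration $\lam^n$, the same verification of $\lam^p\dr\lam^q\subseteq\lam^{p+q}$ by induction on the recursion for $\dr$, the same use of the $1$-cocycle relation to force the second tensor slot of $(\id\ot\pl)\dt(\fraka')$ to have positive weight, and the same final appeal to Proposition~\mref{prop:lchopf}. (The only cosmetic difference is that for the base case $m=1$ the paper simply cites the filtered-bialgebra hypothesis on $A$ rather than rederiving it from Lemma~\mref{lem:delx}.)
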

\begin{proof}
According to Proposition~\mref{prop:lchopf}, we only need to verify that $\shat(A)$ is a connected filtered left counital operated bialgebra.
For this reason, we denote the degree of $a$ by
$$\deg(a):=\min\{k\in\NN \,|\,a\in A^k\},\quad \forall a\in A.$$
For any $m\geq 1$ and any pure tensor $0\neq\fraka=a_1\ot\cdots\ot a_m\in A^{\ot m}$, we set
\begin{equation}\deg(\fraka) := \deg(a_1)+\cdots+\deg(a_m)+m-1.
\mlabel{eq:dega}
\end{equation}
For simplicity, we write $\lam :=\shat(A)=\oplus_{n\geq 1} A^{\ot n}$ and denote by $\lam^k$  the linear span of pure tensors $\fraka\in\lam$ with $\deg(\fraka)\leq k$. Then we get a increasing filtration $\lam^k\subseteq\lam^{k+1}$ for all $k\geq 0$ and $\lam^0=A^0=\bfk 1_A$ by the connectedness of $A$. Let $\fraka=a_1\ot\fraka'\in A^{\ot(n+1)}$ with $\fraka'\in A^{\ot n}$. Then  by Eq.~(\mref{eq:dega})  we obtain
 \begin{equation}
\deg(\fraka)=\deg(a_1)+\deg(\fraka')+1.
\mlabel{eq:fra1}
\end{equation}
Furthermore, if $\fraka\in\lam^r$ for $r\geq1$, then
\begin{equation}
\fraka'\in\lam^{r-{\rm deg} (a_1)-1}.
\mlabel{eq:degfra}
\end{equation}

We next show that the increasing filtration $\lam^k$  satisfies that for all $p,q\geq 0$
\begin{equation} \lam^q\dr\lam^p\subseteq \lam^{p+q}
\mlabel{eq:lpq}
\end{equation}
and
\begin{equation}
\dt(\lam^k)\subseteq \lam^0\ot\lam^k+\sum_{\substack{p+q=k\\p>0,q>0}}\lam^p\ot\lam^q.
\mlabel{eq:lamsub}
\end{equation}
Now use induction on $p+q\geq 0$ to verify Eq.~(\mref{eq:lpq}). For this it suffices to prove  $\fraka\dr\frakb\in\lam^{p+q}$ for all pure tensors $\fraka\in\lam^p$ and $\frakb\in\lam^q$. When $p+q=0$, we have $\fraka,\frakb\in\lam^0$, and so $\fraka\dr\frakb\in\lam^0$ by $\lam^0=\bfk 1_A$ and Eq.~(\mref{eq:dfndr2}).  Assume that Eq.~(\mref{eq:lpq}) holds for $p+q\leq n$. Let $p+q=n+1$. If $p=0$ or $q=0$, then $\fraka\in\lam^0$ or $\frakb\in\lam^0$, proving Eq.~(\mref{eq:lpq}) by Eq.~(\mref{eq:dfndr2}) again. Hence we suppose that $p, q\geq1$. If $\fraka\in A$ or $\frakb\in A$, then $\deg(\fraka\dr\frakb)\leq \deg(\fraka)+\deg(\frakb)$ by Eq.~(\mref{eq:dfndr2}) and the connectedness of $A$, and so Eq.~(\mref{eq:lpq}) holds. Thus we only consider $\fraka\in A^{\ot \ell}$ and $\frakb\in A^{\ot m}$  for $\ell, m\geq2$.
Write $\fraka=a_1\ot\fraka'$ with $\fraka'=a_2\ot\cdots\ot a_\ell$, and $\frakb=b_1\ot\frakb'$ with $\frakb'=b_2\ot\cdots\ot b_m$.  By Eq.~(\mref{eq:dfndr2}) again, we obtain
\begin{equation}
\fraka\dr\frakb =a_1b_1\ot (\fraka' \dr (1_A\ot \frakb')) +a_1b_1\ot ((1_A\ot \fraka')\dr \frakb')
+\lambda a_1b_1\ot( \fraka'\dr\frakb') - a_1b_1\ot \Big(\big(\fraka' \dr \pl(1_A)\big)\dr \frakb'\Big).
\mlabel{eq:frakab}
\end{equation}
By Eq.~(\mref{eq:degfra}), $\fraka'\in\lam^{p-\deg(a_1)-1}$ and $\frakb'\in\lam^{q-\deg(b_1)-1}$. Furthermore, by Eq.~(\mref{eq:fra1}) and $\deg(1_A)=0$ because $\lam^0=\bfk 1_A$, we have
$$\deg(1_A\ot \fraka')=\deg(1_A)+\deg(\fraka')+1=\deg(\fraka')+1\Rightarrow 1_A\ot \fraka'\in \lam^{p-\deg(a_1)}$$
and
$$\deg(1_A\ot \frakb')=\deg(1_A)+\deg(\frakb')+1=\deg(\frakb')+1\Rightarrow 1_A\ot \frakb'\in \lam^{q-\deg(b_1)}.$$
Since $p-\deg(a_1)-1+q-\deg(b_1)=p+q-\deg(a_1)-\deg(b_1)-1<p+q$, we have $ \fraka' \dr (1_A\ot \frakb')\in \lam^{p+q-\deg(a_1)-\deg(b_1)-1}$ by the induction hypothesis. Thus
\begin{eqnarray*}
\deg(a_1b_1\ot(\fraka'\dr(1_A\ot \frakb')))&=&\deg(a_1b_1)+\deg(\fraka'\dr (1_A\ot \frakb'))+1\\
&\leq & \deg(a_1)+\deg(b_1)+p+q-\deg(a_1)-\deg(b_1)-1+1\\
&=&p+q.
\end{eqnarray*}
This gives $a_1b_1\ot(\fraka'\dr(1_A\ot \frakb'))\in \lam^{p+q}$. Similarly, $a_1b_1\ot((1_A\ot\fraka')\dr \frakb')\in \lam^{p+q}$ and $\lambda a_1b_1\ot( \fraka'\dr\frakb')\in \lam^{p+q-1}$. For the fourth term on the right-hand side of Eq.~(\mref{eq:frakab}), by $\deg(P(1_A))=\deg(1_A)+\deg(1_A)+1=1$  and the induction hypothesis, we obtain
$$ \fraka'\dr P(1_A)\in \lam^{p-\deg(a_1)},$$
and thus using the induction hypothesis yields $\big((\fraka' \dr \pl(1_A))\dr \frakb'\big)\in \lam^{p+q-\deg(a_1)-\deg(b_1)-1}$, thereby proving
$$a_1b_1\ot \Big(\big(\fraka' \dr \pl(1_A)\big)\dr \frakb'\Big)\in\lam^{p+q}.$$
Hence all terms on the right-hand side of Eq.~(\mref{eq:frakab}) are in $\lam^{p+q}$,  yielding $\fraka\dr\frakb\in\lam^{p+q}$.

Finally, it remains to prove  Eq.~(\mref{eq:lamsub}).  The proof proceeds by induction on $k\geq 0$, with the case $k=0$ is  true, because $\lam^0=\bfk 1_A$ and $\dt(1_A)=1_A\ot 1_A$. Assume that $k\geq 0$ and Eq.~(\mref{eq:lamsub}) holds for all pure tensors $\fraka\in\lam^k$. Consider  $\fraka\in \lam^{k+1}$. If $\fraka\in A(=\cup_{n\geq 0}A^n)$,  then by $\deg(\fraka)\leq k+1$, we get $\fraka\in A^{k+1}$. Since $A$ is a connected filtered left counital bialgebra and $A^n\subseteq \lam^n$ for all $n\geq 0$, we have
\begin{eqnarray*}
\dt(\fraka)=\da(\fraka)\in A^0\ot A^{k+1}+\sum_{\substack{p+q=k+1\\p>0,q>0}} A^p\ot A^q\subseteq \lam^0\ot\lam^{k+1}+\sum_{\substack{p+q=k+1\\p>0,q>0}} \lam^p\ot \lam^q.
\end{eqnarray*}
We then suppose that $\fraka=a_1\ot \fraka'\in A^{\ot \ell+1}$ with $\fraka'\in A^{\ell}$ for $\ell\geq 1$. Then
\begin{eqnarray*}
\dt(\fraka)&=&\dt(a_1\ot \fraka')\\
&=&\dt(a_1)\bul\big((\id\ot \pl)\dt(\fraka')\big)\quad(\text{by Eq.~(\mref{eq:dfndtre})})\\
&=&\da(a_1)\bul\big((\id\ot \pl)\dt(\fraka')\big).
\end{eqnarray*}
By $\fraka\in\lam^{k+1}$ and  Eq.~(\mref{eq:degfra}), we get $\fraka'\in \lam^{k+1-\deg(a_1)-1}=\lam^{k-\deg(a_1)}$. Then applying the induction hypothesis gives
$$\dt(\fraka')\in \lam^0\ot \lam^{k-\deg(a_1)}+\sum_{\substack{p_2+q_2=k-\deg(a_1)\\p_2>0,q_2>0}}\lam^{p_2}\ot\lam^{q_2}.$$
Thus
\begin{eqnarray*}
\dt(\fraka)&=&\da(a_1)\bul\big((\id\ot \pl)\dt(\fraka')\big)\\
&\in&\Big(\lam^0\ot \lam^{\deg(a_1)}+\sum_{\substack{p_1+q_1=\deg(a_1)\\p_1>0,q_1>0}}\lam^{p_1}\ot\lam^{q_1}\Big)\\
&&\bul\Big(\id\ot\pl\Big)\Big(\lam^0\ot\lam^{k-\deg(a_1)}+\sum_{\substack{p_2+q_2=k-\deg(a_1)\\p_2>0,q_2>0}}\lam^{p_2}\ot\lam^{q_2}\Big)\\
&\subseteq&\Big(\lam^0\ot \lam^{\deg(a_1)}+\sum_{\substack{p_1+q_1=\deg(a_1)\\p_1>0,q_1>0}}\lam^{p_1}\ot\lam^{q_1}\Big)\\
&&\bul\Big(\lam^0\ot\lam^{k-\deg(a_1)+1}+\sum_{\substack{p_2+q_2=k-\deg(a_1)\\p_2>0,q_2>0}}\lam^{p_2}\ot\lam^{q_2+1}\Big)\\
&\subseteq&\lam^0\ot\lam^{k+1}+\sum_{\substack{p_2+q_2=k-\deg(a_1)\\p_2>0,q_2>0}}\lam^{p_2}\ot\lam^{\deg(a_1)+q_2+1}\\
&&+\sum_{\substack{p_1+q_1=\deg(a_1)\\p_1>0,q_1>0}}\lam^{p_1}\ot\lam^{q_1+k-\deg(a_1)+1}
+\sum_{\substack{p_2+q_2=k-\deg(a_1)\\p_1+q_1=\deg(a_1)\\p_1>0,q_1>0,p_2>0,q_2>0}}\lam^{p_1+p_2}\ot\lam^{q_1+q_2+1}\\
&\subseteq&\lam^0\ot\lam^{k+1}
+\sum_{\substack{p_2+q_2=k-\deg(a_1)\\p_1+q_1=\deg(a_1)\\p_1\geq0,q_1>0,p_2\geq0,q_2>0}}\lam^{p_1+p_2}\ot\lam^{q_1+q_2+1}\quad(\text{ $p_1^2+p_2^2\neq 0$})\\
&\subseteq&\lam^0\ot\lam^{k+1}+\sum_{\substack{p+q=k+1\\p>0,q>0}}\lam^p\ot\lam^q\quad (\text{$p:=p_1+p_2,\,q:=q_1+q_2+1$}).
\end{eqnarray*}
This completes the induction and thus proves Eq.~(\mref{eq:lamsub}).
\end{proof}

\smallskip

\noindent {\bf Acknowledgements}: This work was supported by the National Natural Science Foundation of
China (Grant No.~11601199 and 11961031).

\end{document}